\theoremstyle{plain}
\newtheorem{teo}{Theorem}[section]
\newtheorem{lemma}[teo]{Lemma}
\newtheorem{prop}[teo]{Proposition}
\newtheorem{cor}[teo]{Corollary}
\newtheorem{question}[teo]{Question}
\theoremstyle{definition}
\newtheorem{dfn}[teo]{Definition}
\newtheorem{ex}[teo]{Example}
\newtheorem{rmk}[teo]{Remark}
\renewcommand{\bar}{\overline}
\newcommand{\C}{\mathbb{C}}
\newcommand{\Z}{\mathbb{Z}}
\newcommand{\Q}{\mathbb{Q}}
\newcommand{\ra}{\rightarrow}
\newcommand{\cu}{\subseteq}
\newcommand{\lra}{\longrightarrow}
\renewcommand{\sl}{\mathfrak{sl}}
\newcommand{\ragamma}{\stackrel{\gamma^\vee}{\longrightarrow}}
\newcommand{\ralabel}[1]{\stackrel{#1}{\longrightarrow}}
\newcommand{\kk}{\mathbb{K}}
\newcommand{\dbg}[1]{\mathfrak{B}_\Phi^{#1-\text{deg}}}
\newcommand{\BB}{\mathfrak{B}_\Phi}
\newcommand{\Address}{
  \bigskip{\footnotesize

  \textsc{Max-Planck-Institut f\"ur Mathematik, Bonn, Germany}\par\nopagebreak
  \textit{E-mail address}: \texttt{leonardo@mpim-bonn.mpg.de}
}}
\DeclareMathOperator{\Ker}{Ker}
\DeclareMathOperator{\spa}{span}
\DeclareMathOperator{\sgn}{sgn}
\DeclareMathOperator{\coeff}{coeff}
\DeclareMathOperator{\ch}{char}
\DeclareMathOperator{\hgt}{ht}
\title{The Hard Lefschetz Theorem in positive characteristic for the Flag Varieties}
\author{Leonardo Patimo}
\begin{document}

\maketitle
\begin{abstract}
For a given flag variety, we characterize the primes $p$ for which there exists a weight $\lambda$ such that the hard Lefschetz theorem holds for multiplication by $\lambda$  on the cohomology of the flag variety
with coefficients in an infinite field of characteristic $p$.
\end{abstract}

\section{Introduction}

The hard Lefschetz theorem states that, for  a smooth projective complex variety $X$ of (complex) dimension $d$, if $\lambda\in H^2(X,\Q)$ is the first Chern class of an ample line bundle then multiplication by $\lambda^{k}$ 
induces an isomorphism between $H^{d-k}(X,\Q)$ and $H^{d+k}(X,\Q)$. 

This is no longer true when we consider the integral cohomology $H^*(X,\Z)$, or the cohomology $H^*(X,\kk)=H^*(X,\Z)\otimes^L_\Z \kk$ with coefficients in an arbitrary field $\kk$ of characteristic $p>0$.

\begin{dfn}
Let $d\geq 0$ and $V=\bigoplus_{k=0}^{2d}V^k$ be a graded finite dimensional $\kk$-vector space. Let $f:V\ra V$ be a map of degree $2$ (i.e. $f(V^k)\cu V^{k+2}$ for any $k$). We say that $f$ has the \emph{Hard Lefschetz Property} on $V$ (HLP for short)
if for any $0< k\leq d$ the map $f^k:V^{d-k}\ra V^{d+k}$ is an isomorphism.

If $V$ is a graded $\kk$-algebra we say that $\eta\in V^2$ has the HLP on $V$  if the multiplication by $\eta$ has the HLP.
\end{dfn}

Let $G$ be a complex semisimple group and $B$ be a Borel subgroup of $G$.
We denote by $X=G/B$ the flag variety of $G$: it is a smooth projective complex variety.

We will answer the following:
\begin{question}\label{question} 
Let $\kk$ be an arbitrary infinite field of characteristic $p$.
For which primes $p$ does there exist $\lambda\in H^2(X,\kk)$ such that $\lambda$ has the Hard Lefschetz Property on  $H^{*}(X,\kk)$?
\end{question}

The flag variety $X$ is a fundamental object in representation theory, and many geometric properties of $X$ have direct consequences in term of representations.

A first example occurs in characteristic $0$, where the hard Lefschetz theorem is one of the main ingredients in the proof of the Kazhdan-Lusztig conjectures \cite{KL}, which give a formula for computing the characters of simple infinite dimensional highest weight modules over the Lie algebra of $G$ (the importance of the hard Lefschetz theorem is underlined in \cite{EW1}).
It seems therefore interesting and natural to look for Hodge Theoretic properties also in the positive characteristic setting.

Let now $\kk$ be an algebraically closed field of characteristic $p$ and let $G^\vee_\kk$ be the Langlands dual group of $G$, defined over $\kk$.
Lusztig's conjecture \cite{Lu} predicts a formula for the characters of irreducible $G^\vee_\kk$-modules in terms of affine Kazhdan-Lusztig polynomials.
Lusztig's conjecture was proven for $p$ very large \cite{AJS}, but the only explicit bound known is huge \cite{Fie} (roughly $p>n^{n^2}$, where $n=$rank$(G^\vee_\kk)$).
In contrast, Williamson \cite{W5} found a family of counterexamples to Lusztig's conjectures for $p=O(c^n)$, with $c\sim 1,101$ (see also \cite{W6} for a detailed survey on the subject).

It is currently still an open problem to understand more precisely where Lusztig's conjecture holds. 
A recent approach by Fiebig \cite{Fie2} shows how the local hard Lefschetz theorem on Schubert varieties in affine flag varieties implies Lusztig's conjecture.
However, this gives only a partial answer since it is still unclear how to find effective bounds for the hard Lefschetz theorem in the general setting of affine flag varieties.

Nevertheless, we can give a complete answer to Question \ref{question}, and this can be seen as a further step in this approach to understand Lusztig's conjecture.

\subsection*{Structure of the paper}

In \S 2 we discuss in details our main result, Theorem \ref{teo1}: for $p$ larger than the number of positive roots then we can answer Question \ref{question} affirmatively. 

Using basic Schubert calculus, in \S 3 we translate the original problem, which is geometric in nature, into a combinatorial one, which is expressed only in terms of the Bruhat graph.
In \S 4 we show how the Bruhat graph can be ``degenerated'' into a product of simpler graphs (corresponding to maximal parabolic subgroups), and that it is enough to show hard Lefschetz theorem for the latter.

We discuss when HLP holds for those simpler graphs (for good choices of the maximal parabolic subgroups) in \S 5. In \S 6 we discuss the HLP for artinian complete intersection monomial rings, i.e. rings of the form $\kk[x_1,x_2,\ldots,x_n]/(x_1^{d_1},x_2^{d_2},\ldots,x_n^{d_n})$, opportunely graded: this allows to make use of the knowledge of the HLP for the single factors to investigate the HLP for a product of graphs.

Finally in \S 7 we put things together to obtain a proof of Theorem \ref{teo1}.

\subsection*{Acknowledgements}
I wish to warmly thank my Ph.D. supervisor Geordie Williamson for many precious comments and discussion, and for suggesting me that Stembridge's formula could be a good starting point. 
I would also like to thank
the referee for many useful comments.

Some of this work was completed during a research stay at the RIMS in Kyoto.  
I was supported by the Max Planck Institute for Mathematics. 

\section{Main result}

Let $G$ be a complex connected semisimple group, $B\cu G$ be a Borel subgroup and $T\cu B$ be a maximal torus.
Let $W=N_G(T)/T$ be the Weyl group and $\Phi$ be the root system of $G$. Let $\Phi^+\cu \Phi$ be the set of positive roots, consisting of the roots which occur in the Lie algebra of $B$.
Let $\Phi^\vee$ denote the set of coroots and, for any $\gamma\in \Phi$, let $\gamma^\vee$ denote the corresponding coroot.
Let $X=G/B$ be the flag variety of $G$. It is a smooth projective variety of complex dimension $d:=|\Phi^+|=\frac12 |\Phi|$.

We recall a few facts about the cohomology of $X$, see \cite[\S1]{BGG} for more details.
For $w\in W$, let $X_w=\bar{BwB/B}\cu X$ be the Schubert variety corresponding to $w$.
The elements $[X_w]\in H_{2\ell(w)}(X,\Z)$, the fundamental classes of the Schubert varieties $X_w$, are a basis of the homology of $X$. 
By taking the dual basis we obtain a basis $\{P_w\}_{w\in W}$, with $P_w\in H^{2\ell(w)}(X,\Z)$, of the integral cohomology of $X$. We call $\{P_w\}_{w\in W}$ the \emph{Schubert basis}.

Because $H^*(X,\Z)$ is free, for any field $\kk$ we have $H^*(X,\kk)\cong H^*(X,\Z)\otimes \kk$. Therefore the Schubert basis induces a basis $\{P_w\otimes 1\}_{w\in W}$ of $H^*(X,\kk)$. We will denote $P_w\otimes 1$ simply by $P_w$.

If moreover the group $G$ is simply connected then $H^2(X,\Z)$ can be identified with the character lattice, i.e.
$$H^2(X,\Z)\cong X^*(T)=\{\lambda:T\ra \C^*\mid \lambda\text{ hom. of algebraic groups}\}.$$

We denote by $\langle-,-\rangle$ the pairing between weights and coroots such that for any reflection $s_\alpha$ we have $s_\alpha(\lambda)=\lambda-\langle\lambda,\alpha^\vee\rangle\alpha$.
Then $\langle-,-\rangle$ can be extended to a $\kk$-valued pairing between $H^2(X,\kk)$ and $\Z\Phi^\vee$. We will abuse notation and refer to the elements of $H^2(X,\kk)$ as weights.

A first partial answer to the Question \ref{question} was given by Stembridge. In \cite{Ste} he computes explicitly the map $\lambda^d:H^0(X,\Z)\ra H^{2d}(X,\Z)$. We have:
\begin{equation}\label{stemfor}
\lambda^d\cdot P_e=|\Phi^+|!\prod_{\alpha\in\Phi^+}\frac{\langle \lambda,\alpha^\vee\rangle}{\hgt(\alpha)}P_{w_0}
\end{equation}
where $e\in W$ is the identity and $w_0\in W$ 
is the longest element of $W$.
The height of a root, here denoted by $\hgt(\alpha)$, is the sum of its coordinates when expressed in the basis of simple roots.

From Stembridge's formula \eqref{stemfor} it follows that if $\kk$ is a field of characteristic $p$ and $p$ does not divide $|\Phi^+|!$, i.e. if  $p>|\Phi^+|$, then there exists $\lambda\in H^2(X,\kk)$  such that $\lambda^d:H^0(X,\kk)\ra H^{2d}(X,\kk)$ is an isomorphism: we can take, for example,
$\rho=\frac12\sum_{\beta\in \Phi^+}\beta$ so that $\langle\rho,\alpha^\vee\rangle=1$ for every simple root $\alpha$.

\begin{rmk}\label{pleqPhi}
Let $k_i$ be the number of positive roots of height $i$. Then we have  $k_1\geq k_2\geq \ldots$ and $\sum k_i=|\Phi^+|$ (see \cite[\S 3.20]{Hum}). We can then regard $k_1\geq k_2\geq\ldots$ as a partition of $|\Phi^+|$ and consider the dual partition $m_1\geq m_2\geq\ldots$, i.e. $m_i=\#\{j\mid k_j\geq i\}$. The integers $m_i$ are the exponents of the group $W$. The values of the exponents increased by $1$ can be found in \cite[Table 1, \S 3.7]{Hum}.
We have
$$\prod_{\alpha\in \Phi^+} \hgt(\alpha)=\prod_{j\geq 1}j^{k_j}=\prod_{i\geq 1} m_i!.$$
It follows that the number
$$\frac{|\Phi^+|!}{\prod_{\alpha\in\Phi^+}\hgt(\alpha)}=\binom{|\Phi^+|}{m_1,m_2,\ldots}$$
is an integer and it is divided by $\displaystyle \binom{m_{j_1}+m_{j_2}+\ldots m_{j_r}}{m_{j_1},m_{j_2},\ldots, m_{j_r}}$, for any finite subset $\{j_1,j_2,\ldots,j_r\}$ of $\mathbb{N}$.

Therefore, from Stembridge's formula, it also follows that there cannot exist $\lambda$ such that $\lambda^d$ is an isomorphism if \begin{equation}\label{divides}
p |\frac{|\Phi^+|!}{\prod_{\alpha\in\Phi^+}\hgt(\alpha)}.
\end{equation}
Now, using the known explicit values of the exponents, one can check that, if $p$ is a prime such that $p\leq |\Phi^+|$ and $p$ is not as in Table \ref{table}, then \eqref{divides} holds.

\begin{table}
\caption{}\label{table}
\begin{center}

\begin{tabular}{ |c | c | c | c |}
\hline
$\Phi$ & $p$ & $|\Phi^+|$ & $\exists \lambda$ with HLP? \\
\hline
$A_2$ & $2$ & $3$ & Yes\\
$B_2$ & $3$ & $4$ & Yes\\
$G_2$ & $5$ & $6$ & Yes\\
$B_3$ & $5$ & $9$ & No\\
$C_3$ & $5$ & $9$ & No\\
$F_4$ & $5$ & $24$ & No\\

\hline
\end{tabular}
\end{center}
\end{table}

However, in type $B_3$, $C_3$ and $F_4$ we can compute explicitly, with the help of the software Magma \cite{Magma}, the map $\lambda^{d-2}:H^2(X,\kk)\ra H^{2d-2}(X,\kk)$ for an arbitrary weight $\lambda$. 
We get that $\lambda^{d-2}$ is never an isomorphism if $\kk$ is a field of characteristic $5$. 
\end{rmk}

Here we give a complete answer to Question \ref{question}. The main result is the following:

\begin{teo}\label{teo1}
Let $\kk$ be an infinite field of characteristic $p>0$. Then there exists
 $\lambda\in H^2(X,\kk)$ such that the hard Lefschetz theorem holds for $\lambda$ on $H^*(X,\kk)$ if and only if $p>|\Phi^+|$ or $\Phi$ and $p$ are as in the first three lines of Table \ref{table}.
\end{teo}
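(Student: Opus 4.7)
The plan is to split the statement into two directions. For the \emph{only if} direction, Remark \ref{pleqPhi} already handles every $p \leq |\Phi^+|$ not listed in Table \ref{table}: for these primes $p$ divides $|\Phi^+|!/\prod_\alpha \hgt(\alpha)$, so Stembridge's formula \eqref{stemfor} gives $\lambda^d \cdot P_e = 0$ for every $\lambda$ and HLP already fails at the top map. The three remaining pairs $(B_3,5), (C_3,5), (F_4,5)$ are ruled out by the Magma computation cited in that remark, which shows that the map $\lambda^{d-2}$ is not an isomorphism for any $\lambda$ even though Stembridge's formula stays nonzero. For the three exceptional ``Yes'' entries $(A_2,2), (B_2,3), (G_2,5)$ of Table \ref{table}, I would verify HLP by an explicit small-rank computation, which is feasible since the Weyl groups involved have order at most $12$.

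The substantive content is the \emph{if} direction for $p > |\Phi^+|$. Section 3 recasts HLP combinatorially using the Chevalley/Monk formula
$$\lambda \cdot P_w = \sum_{\beta \in \Phi^+,\, \ell(s_\beta w) = \ell(w)+1} \langle \lambda, \beta^\vee \rangle\, P_{s_\beta w},$$
so that multiplication by a weight on the Schubert basis becomes a sum over upward edges of the Bruhat graph weighted by the integers $\langle \lambda, \beta^\vee\rangle$. The HLP then amounts to the non-vanishing of certain determinants polynomial in $\lambda$, an open condition, so it is enough to exhibit $\lambda$ satisfying HLP at generic values, working over $\kk(\lambda)$.

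Following the strategy of Sections 4--6, I would next deform the Bruhat graph into a Cartesian product of the ``parabolic'' Bruhat graphs $W/W_{P_i}$ associated to maximal parabolics $P_i$. Concretely, writing $\lambda = \sum c_i \varpi_i$ in the basis of fundamental weights and rescaling the coefficients $c_i$ by a parameter, one can extract a leading term that decouples multiplication by $\lambda$ into a tensor product of multiplications on the smaller cohomologies $H^*(G/P_i, \kk)$. Since the set of linear forms satisfying HLP is Zariski-open, limits of HLP forms retain HLP; and by the criterion for artinian complete intersection monomial rings developed in Section 6, HLP passes through tensor products. It is therefore enough to verify HLP on each factor $H^*(G/P_i, \kk)$.

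The main obstacle, handled in Section 5, is the actual verification of HLP for $H^*(G/P_i, \kk)$ for a sufficient collection of maximal parabolics. For well-chosen $P_i$ (typically cominuscule or otherwise ``small'' maximal parabolics) the cohomology admits a presentation as an artinian complete intersection monomial ring $\kk[x_1,\ldots,x_n]/(x_1^{d_1},\ldots,x_n^{d_n})$ whose numerical data produces multinomials strictly bounded by $|\Phi^+|$, so no prime $p > |\Phi^+|$ can divide the structural coefficients controlling HLP on the factor. Combining this verification with the degeneration of Section 4 yields the ``if'' direction in Section 7 and completes the proof.
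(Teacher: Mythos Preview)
Your outline of the \emph{only if} direction and the global shape of the \emph{if} direction match the paper. But there are two genuine gaps in how you describe Sections~4--6, and one backwards statement.

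First, ``limits of HLP forms retain HLP'' is the wrong direction: HLP is Zariski-open, so it is \emph{failure} of HLP that is closed under limits. The paper does not take a limit at all; it shows (Corollary~\ref{corbrutto}) that the degenerate determinant $D_k^{(n-1)}(\lambda)$ is the \emph{leading term} of $D_k(\lambda)$ in a lexicographic monomial order, so if $D_k^{(n-1)}\neq 0$ then $D_k\neq 0$.

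Second, the degeneration is not to a product $\prod_i G/P_i$ of maximal parabolic quotients of the same $G$. It is iterated along a flag $S=I_0\supset I_1\supset\cdots\supset I_n=\emptyset$, and the factors are $P_{I_{j-1}}/P_{I_j}$, each a maximal parabolic flag variety for the smaller Levi group with root system $\Phi(I_{j-1})$. This matters because the bound one eventually needs is $\sum_j \dim(P_{I_{j-1}}/P_{I_j})=|\Phi^+|$, not a sum of $\dim(G/P_i)$.

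Third, and most importantly, the cohomologies $H^*(P_{I_{j-1}}/P_{I_j},\kk)$ are \emph{not} themselves artinian monomial complete intersections in general, so Section~6 does not apply to them directly. The actual argument is the other way around: Section~5 verifies HLP for $\varpi_j$ on each factor by inspecting the explicit parabolic Bruhat graph (a simple chain or near-chain in the classical types, and a Magma computation for the exceptional types). Only \emph{after} HLP is known on each factor does one invoke the Lefschetz decomposition (Lemma~\ref{primitive}) to write each factor as a direct sum of strings $\kk[\varpi_j]/(\varpi_j^{k_j+1})$. The tensor product then splits as a direct sum of rings $\kk[\varpi_1,\ldots,\varpi_n]/(\varpi_1^{k_1+1},\ldots,\varpi_n^{k_n+1})$ with $\sum k_j\leq |\Phi^+|<p$, and it is to \emph{these} summands that Theorem~\ref{Cook} applies. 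So Section~6 is the glue that passes HLP from the factors to the tensor product, not the tool that establishes HLP on the factors.
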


\begin{rmk}\label{rank2}
Let $X$ be of type $B_2$ and $\kk$ be an infinite field of characteristic $3$. 
We label the simple roots in the Dynkin diagram as $\alpha\Rightarrow \beta$.
 Let $\lambda=a\varpi_\alpha+b\varpi_\beta$ be an arbitrary weight, where $a,b\in \kk$ and $\varpi_\alpha,\varpi_\beta$ are the fundamental weights. We can compute explicitly the hard Lefschetz determinants:
 \begin{itemize}
  \item $D_4(a,b):=\det(\lambda^4:H^0(X,\mathbb{F}_5)\ra H^8(X,\mathbb{F}_5))=4ab(a+b)(a+2b)$;
  \item $D_2(a,b):=\det(\lambda^2:H^2(X,\mathbb{F}_5)\ra H^6(X,\mathbb{F}_5))=-(a^2+2ab+2b^2)$.
 \end{itemize}
The polynomials $D_2$ and $D_4$ are not identically zero, so there exists $\lambda$ with the HLP. For instance, we can choose $\lambda=a\varpi_\alpha+\varpi_\beta$, with $a\in \kk\setminus\{0,1,2\}$ such that it is not a root of the polynomial $x^2+2x+2$.
 
Similar elementary computations show that there exists $\lambda$ with the HLP on $H^*(X,\kk)$ if $X$ is of type $A_2$ (resp. $G_2$) and $\kk$ is a infinite field of characteristic $2$ (resp. $5$).
Thus $A_2$, $B_2$ and $G_2$ are the only types for which there exists $\lambda\in H^2(X,\kk)$ 
with the HLP for a field $\kk$ such that char$(\kk)\leq |\Phi^+|$.
\end{rmk}

\begin{rmk}\label{b2}
 The situation is more subtle if one considers the case of a finite field. 

For example, let $X$ be of type $B_2$ and let $\kk=\mathbb{F}_5$.
Similarly to Remark \ref{rank2}, let $\lambda=a\varpi_\alpha+b\varpi_\beta$ with $a,b\in \mathbb{F}_5$. Notice that in this case we have  $D_2(a,b)=-(a^2+2ab+2b^2)=-(a+3b)(a+4b)$.
It follows that there are no $a,b\in \mathbb{F}_5$ such that $\lambda$ has the HLP on $H^*(X,\mathbb{F}_5)$, although $5>|\Phi^+|=4$.
 \end{rmk}

\section{The Bruhat Graph of a Root System}

\begin{dfn}
We define the \emph{Bruhat graph} $\BB$ of $\Phi$. The vertices of the graph are the element of $W$.
There is an arrow $w\ragamma v$ for $v,w\in W$ if $\ell(v)=\ell(w)+1$ and $wt_\gamma=v$, where $t_\gamma$ is the reflection corresponding to the positive root $\gamma$.
\end{dfn}

\begin{rmk}
In this paper our terminology Bruhat graph is somewhat non-standard. For example, in \cite[Definition 1.1]{Dye} it is defined to be the graph whose vertices are the elements of $W$, 
in which there is an arrow $w\ragamma v$ for $v,w\in W$ whenever $v=wt_\gamma$ and $\ell(v)>\ell(w)$.
\end{rmk}

\begin{ex}\label{Graph}
If $G=SL_3(\C)$, then $\Phi$ is the root system of type $A_2$ and $W\cong \mathcal{S}_3$, the symmetric group on $3$ elements. 
It is generated by the simple transpositions $s$ and $t$. Let $\alpha$ and $\beta$ be the two simple coroots corresponding to $s$ and $t$. The Bruhat graph $\BB$ is:

\begin{center}
 \begin{tikzpicture}
 	\node[shape=circle,draw=black,minimum height=8mm,minimum width=8mm] (e) at (0,0) {$e$};
    \node[shape=circle,draw=black,minimum height=8mm,minimum width=8mm] (s) at (-1,1) {$s$};
    \node[shape=circle,draw=black,minimum height=8mm,minimum width=8mm] (t) at (1,1) {$t$};
    \node[shape=circle,draw=black,minimum height=8mm,minimum width=8mm] (st) at (1,2.5) {$st$};
    \node[shape=circle,draw=black,minimum height=8mm,minimum width=8mm] (ts) at (-1,2.5) {$ts$};
    \node[shape=circle,draw=black,minimum height=8mm,minimum width=8mm] (sts) at (0,3.5) {$sts$};
    
    \path [->,color=red, pos=0.7] (e) edge node[scale=0.8,below] {$\alpha$} (s);
    \path [->,color=blue, pos=0.7] (e) edge node[scale=0.8,below] {$\beta$} (t);
    \path [->,color=blue, pos=0.7] (s) edge node[scale=0.8,above] {$\beta$} (st);
    \path [->,color=violet, pos=0.5] (s) edge node[scale=0.8,left] {$\alpha+\beta$} (ts);
    \path [->,color=violet, pos=0.5] (t) edge node[scale=0.8,right] {$\alpha+\beta$} (st);
  	\path [->,color=red, pos=0.7] (t) edge node[scale=0.8,above] {$\alpha$} (ts);
    \path [->,color=red, pos=0.3] (st) edge node[scale=0.8,above] {$\alpha$} (sts);
   	\path [->,color=blue, pos=0.3] (ts) edge node[scale=0.8,above] {$\beta$} (sts);
 \end{tikzpicture}
\end{center}
\end{ex}

We recall Pieri-Chevalley's formula \cite[Theorem 3.14]{BGG}. Let $\lambda\in H^2(X,\Z)$ be a weight. Then 
$$\lambda\cdot P_w=\sum_{w\ragamma {v}}\langle \lambda, \gamma^\vee\rangle P_{v}.$$

If $\ell(v)-\ell(w)=k$, let $C_{w,v}(\lambda)\in \Z$ be defined by
$$\lambda^k\cdot P_w=\sum_{\ell(v)=\ell(w)+k}C_{w,v}(\lambda)P_v.$$
Then we have
$$C_{w,v}(\lambda)=\sum \langle \lambda, \gamma_1^\vee\rangle \langle \lambda, \gamma_2^\vee\rangle\ldots \langle \lambda,\gamma_k^\vee\rangle=\sum C_{w,w_1}(\lambda)C_{w_1,w_2}(\lambda)\ldots C_{w_{k-1},v}(\lambda)$$
where the sum runs over all paths $w\ralabel{\gamma_1^\vee}w_1\ralabel{\gamma_2^\vee}w_2\ralabel{\gamma_3^\vee} \ldots \ralabel{\gamma_k^\vee}v$ connecting $w$ to $v$.

Let $S\cu W$ be the set of simple reflections and $I\cu S$ be a subset. Let $W_I$ be the subgroup generated by the simple reflections in $I$. We denote by $W^I\cu W$ the set of representatives of minimal length in $W/W_I$.
If $J\cu I$, then $W_I^J$ is well defined. Let $\Phi(I)$ be the sub-root system of $\Phi$ generated by the simple roots in $I$. 

If $w\in W$, there exist unique $w'\in W^I$ and $w''\in W_I$ such that $w=w'w''$.  Moreover if $w=w'w''$ with $w'\in W^I$ and $w''\in W_I$ we have $\ell(w)=\ell(w')+\ell(w'')$. 

We denote by $\geq$ the Bruhat order in $W$.

\begin{lemma}\label{poset}
Let $w\geq v$ in $W$. Then $w'\geq v'$.
\end{lemma}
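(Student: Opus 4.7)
The plan is to use the subword characterization of the Bruhat order: $v\leq w$ if and only if every reduced expression for $w$ admits a subword that is itself a reduced expression for $v$. The key setup is to choose a reduced expression of $w$ adapted to its parabolic factorization. Since $\ell(w)=\ell(w')+\ell(w'')$, the concatenation $\mathbf{w}=\mathbf{w}'\cdot\mathbf{w}''$ of any reduced expressions $\mathbf{w}'$ for $w'$ and $\mathbf{w}''$ for $w''$ is reduced, and every letter of $\mathbf{w}''$ belongs to $I$.

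Using $v\leq w$, I would extract a subword $\mathbf{v}$ of $\mathbf{w}$ that is a reduced expression for $v$, and split it along the boundary of the factorization as $\mathbf{v}=\mathbf{v}^{(1)}\cdot\mathbf{v}^{(2)}$. Both halves are reduced as factors of a reduced word, so if $x$ and $y$ denote the elements they spell, then $v=xy$ with $\ell(v)=\ell(x)+\ell(y)$ and $y\in W_I$. The subword criterion applied to $\mathbf{v}^{(1)}\subseteq \mathbf{w}'$ then yields $x\leq w'$. Writing $x=x'x''$ with $x'\in W^I$ and $x''\in W_I$, the decomposition $v=x'(x''y)$ with $x''y\in W_I$ and the uniqueness of the parabolic factorization force $v'=x'$. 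Since $\ell(x)=\ell(x')+\ell(x'')$, a reduced expression for $x'$ is a prefix of a reduced expression for $x$, so $x'\leq x$, and chaining gives $v'=x'\leq x\leq w'$.

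The step I expect to require the most care is recovering $v'$ from the splitting: although $x\leq w'$ follows directly from the subword criterion, $x$ itself need not lie in $W^I$, and one has to verify that the $W_I$-tail $x''$ of $x$ gets absorbed into $v''$ via the uniqueness of parabolic decomposition. Everything else is a standard application of subword-type arguments in Coxeter groups.
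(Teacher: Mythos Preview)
Your argument is correct. The delicate point you flagged --- that the left factor $x$ extracted from the subword need not lie in $W^I$ --- is handled cleanly: since $W^I$ is a system of coset representatives for $W/W_I$, the factorization $v=x'(x''y)$ with $x'\in W^I$ and $x''y\in W_I$ forces $v'=x'$ by uniqueness of the coset representative (no length condition is needed here). The inequality $x'\leq x$ then follows from the subword criterion applied to the concatenated reduced expression for $x=x'x''$, and you are done.

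The paper's own proof is quite different in character: it does not argue at all, but simply cites \cite[Lemma~2.2]{Dou} for the fact that the projection $W\to W/W_I$ is order-preserving. Your proof is therefore more self-contained and elementary; it unpacks what that citation hides, using only the subword characterization of Bruhat order and the standard parabolic decomposition. The trade-off is length versus transparency: the paper's one-line citation keeps the exposition moving, while your argument would make the paper independent of \cite{Dou} at this point and exposes the mechanism behind the monotonicity.
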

\begin{proof}
The projection $W\ra W/W_I$ is a morphism of posets. This follows from \cite[Lemma 2.2]{Dou}.
\end{proof}

Let $P_I$ be the parabolic subgroup $B\cu P_I\cu G$ corresponding to the subset $I$. The projection $G/B\ra G/P_I$ induces an injective map $H^*(G/P_I,\Z)\ra H^*(G/B,\Z)$. The image is the subspace generated by all the $P_w$, with $w\in W^I$, cf. \cite[Theorem 5.5]{BGG}. 

For $s\in S$, let $\alpha_s\in \Phi$ denote the corresponding simple root and  $\varpi_s$ denote the corresponding fundamental weight, i.e. $\langle \varpi_s,\alpha_t^\vee\rangle =\delta_{s,t}$ for any $t\in S$.

From Pieri-Chevalley's formula we get $\varpi_s=P_s$ for any $s\in S$.
So the subspace $H^2(G/P_I,\Z)\cu H^2(G/B,\Z)$ has as a basis the set $\{\varpi_s\}_{s\in S\setminus I}$. 

\begin{dfn}
Let $I$ be a subset of $S$. We define the \emph{degeneration map} $\pi_I:\Z \Phi^\vee\ra \Z\Phi^\vee$ as follows: 

$$ \pi_I\left(\sum_{s\in S}c_s\alpha_s^\vee\right)=\begin{cases}\sum_{s\in S}c_s\alpha_s^\vee=\sum_{s\in I}c_s\alpha_s^\vee&\text{if }c_s=0\; \forall s\in S\setminus I\\
\sum_{s\in S\setminus I}c_s\alpha_s^\vee & \text{otherwise.}\end{cases}$$ 
\end{dfn}

\begin{dfn}\label{pbg}
Let $I\cu S$ be a subset. 
The \emph{parabolic Bruhat graph} $\BB^I$ is a graph whose vertices are the elements in $W^I$. For any edge $w\ragamma  v$ in $\BB$, with $w,v\in W^I$, we put an edge $w\xrightarrow{\pi_I(\gamma^\vee)}v$ in $\BB^I$, where $\pi_I:\Z\Phi^\vee \ra \Z\Phi^\vee$
is the degeneration map. 
\end{dfn}

Notice that if $w,v\in W^I$ with $w\ragamma v$, then $\gamma\not\in \Phi(I)$. Hence in Definition \ref{pbg} only the second case in the definition of the degeneration map $\pi_I$ is used.

We see easily from Pieri-Chevalley's formula that the graph $\BB^I$ describes the multiplication by $\lambda\in H^2(G/P_I,\Z)$ in $H^*(G/P_I,\Z)$ in the Schubert basis $\{P_w\}_{w\in W^I}$, i.e.
$$\lambda\cdot P_w =\sum_{w\ralabel{\delta} v\in\BB^I}\langle \lambda,\delta\rangle P_{v}.$$

\section{The Degeneration of a Bruhat Graph}

Fix now $\kk$ an arbitrary infinite field and let $\lambda\in H^2(X,\kk)$ be an arbitrary weight. 

We label the elements of $S=\{1,2,\ldots,n\}$, then we can express $\lambda$ as $\sum_{i=1}^n x_i\varpi_i$ with $x_i\in \kk$. From now on we will regard the $x_i$ as indeterminate variables.

After we fix arbitrarily an ordering of the Schubert basis (or, equivalently, of the elements of $W$) the map $\lambda^k:H^{d-k}(X,\kk)\ra H^{d+k}(X,\kk)$ can be thought of as a square matrix with number of columns equal to the number of elements of length $(d-k)/2$ in $W$. 
Taking the determinant we obtain a polynomial $D_k(\lambda)=D_k(x_1,\ldots,x_n)$. Since the field $\kk$ is infinite, the existence of $\lambda$ satisfying the Lefschetz property is equivalent to  $D_k(x_1,\ldots,x_n)\neq 0$,
for all  $0< k\leq n$.

The polynomials $D_k(\lambda)$ appear to be hard to compute explicitly. However, it is sufficient for our purposes to compute a single term in $D_k(\lambda)$:
its leading term in the lexicographic order $x_1>x_2>\ldots>x_n$.

\begin{dfn}
Let $I$ be a subset of $S$. We say that $w$ \textit{$I$-dominates} $v$ if $w=w'w''$, $v=v'v''$, with $w',v'\in W^I$, $w'',v''\in W_I$ and $w'\geq v'$, $w''\geq v''$ ($\geq$ is the usual Bruhat order).
We say that an edge $w\ragamma v$ is $I$-\textit{relevant} if $v$ $I$-dominates $w$. A path connecting $w$ to $v$ is $I$-relevant if all its edges are $I$-relevant.
\end{dfn}
In view of Lemma \ref{poset} we have that $v$ $I$-dominates $w$ if and only if $v\geq w$ and $v''\geq w''$.

\begin{lemma}\label{deodhar}
 Let $v,w\in W$ such that $v'=w'$. Then $v\geq w$ if and only if $v''\geq w''$.
\end{lemma}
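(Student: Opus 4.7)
My plan is to prove both directions via the subword characterization of the Bruhat order ($w \leq v$ iff some subword of a reduced expression for $v$ represents $w$), together with the length-additivity $\ell(v'x)=\ell(v')+\ell(x)$ for $x\in W_I$ and $v'\in W^I$.

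For the easy direction, assume $w'' \leq v''$ in $W_I$. Fix a reduced expression for $v'$ and reduced expressions for $v''$, $w''$ inside $W_I$; by length-additivity, concatenation gives reduced expressions for $v=v'v''$ and $w=v'w''$. A subword of the reduced expression for $v''$ realizing $w''$, prepended by the fixed reduced expression for $v'$, is a subword of the reduced expression for $v$ realizing $w$. Hence $w\leq v$.

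For the converse, assume $w = v'w'' \leq v'v'' = v$ and fix reduced expressions $v' = s_1\cdots s_k$ and $v'' = t_1\cdots t_m$ (with every $t_j$ a simple reflection in $I$), so that $s_1\cdots s_k t_1\cdots t_m$ is reduced for $v$. The subword property lets me factor any witnessing subword for $w$ as $w = u\cdot z$ where $u$ comes from the prefix $s_1\cdots s_k$ (so $u\leq v'$ in $W$) and $z$ comes from the suffix $t_1\cdots t_m$ (so $z\in W_I$ and $z\leq v''$ in $W_I$). The decisive step is to prove $u=v'$: since both $z$ and $w''$ lie in $W_I$, the element $u=wz^{-1}$ lies in the coset $wW_I = v'W_I$; because $v'\in W^I$ is the minimum length representative of this coset, $\ell(u)\geq \ell(v')$, while $u\leq v'$ gives $\ell(u)\leq \ell(v')$, so antisymmetry of the Bruhat order yields $u=v'$. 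Cancelling $v'$ from $v'w''=v'z$ produces $w''=z\leq v''$, as required.

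The one nontrivial point is the coset/length argument forcing $u=v'$; everything else is formal manipulation of reduced expressions. (This lemma is a standard factorization fact due to Deodhar, so a reference could substitute for the direct proof if preferred.)
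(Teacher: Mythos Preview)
Your proof is correct. Both directions are argued cleanly via the subword characterization of Bruhat order, and the crucial step forcing $u=v'$ is valid: since $u=wz^{-1}\in v'W_I$ and $v'$ is the unique element of minimal length in that coset, the inequality $\ell(u)\geq \ell(v')$ combined with $u\leq v'$ pins down $u=v'$.

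Your approach differs from the paper's. The paper argues by induction on $\ell(v')$: choosing $s\in S$ with $sv'<v'$, one has $sv'\in W^I$ (by \cite[Lemma~3.1]{Deo}) and then Deodhar's lifting property \cite[Theorem~1.1]{Deo} gives $v\geq w\iff sv\geq sw$; since $(sv)'=(sw)'=sv'$ and $(sv)''=v''$, $(sw)''=w''$, the inductive hypothesis applies. By contrast, your argument is a direct, non-inductive appeal to the subword property together with the minimal-length characterization of $W^I$. What your route buys is self-containment: no black-box citations are needed beyond the subword criterion, and the mechanism is transparent. What the paper's route buys is brevity once one is willing to invoke the two Deodhar results, and it highlights that the lemma is an immediate consequence of the standard lifting machinery. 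Either proof would be acceptable here.
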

\begin{proof}
Let $s\in S$ such that $sv'<v'$. We have $sv'\in W^I$ \cite[Lemma 3.1]{Deo}. Moreover, by   \cite[Theorem 1.1]{Deo}, we have $v\geq w$ if and only if $sv\geq sw$, so we can easily conclude by induction on $\ell(v')$.
\end{proof}

\begin{lemma}\label{crucial}
Let $w\ragamma v$ be an edge in $\BB$. Then $w\ragamma  v$ is $I$-relevant if and only if $\ell(v')\leq \ell(w')+1$.
\end{lemma}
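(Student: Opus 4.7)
The plan is to combine the length identity $\ell(v)=\ell(w)+1$ with the parabolic factorization, and reduce the nontrivial direction to a reduced-word argument using the strong exchange condition. The edge $w\ragamma v$ gives $v>w$ in Bruhat order, so Lemma \ref{poset} yields $v'\geq w'$, and in particular $\delta:=\ell(v')-\ell(w')\geq 0$. Combined with $\ell(v')+\ell(v'')=\ell(w')+\ell(w'')+1$, this forces $\ell(w'')-\ell(v'')=\delta-1$. I would then split into the three cases $\delta=0$, $\delta=1$, and $\delta\geq 2$, and check that $I$-relevance matches the condition $\ell(v')\leq \ell(w')+1$ in each.

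The two extremal cases are immediate. When $\delta=0$ one has $v'=w'$, so by Lemma \ref{deodhar} $v''>w''$ and the edge is $I$-relevant. When $\delta\geq 2$ one has $\ell(v'')<\ell(w'')$, so $v''\geq w''$ fails and the edge is not $I$-relevant. In both extremes, the stated condition holds if and only if the edge is $I$-relevant.

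The real content is the case $\delta=1$, where I would show directly that $w''=v''$, which makes the edge $I$-relevant. Fix a reduced expression $v=s_{j_1}\cdots s_{j_q}\cdot s_{j_{q+1}}\cdots s_{j_{r+1}}$ adapted to the parabolic factorization, so $v'=s_{j_1}\cdots s_{j_q}$ with $q=\ell(v')$ and $v''=s_{j_{q+1}}\cdots s_{j_{r+1}}$. Since $w=vt_\gamma$ and $\ell(w)<\ell(v)$, the strong exchange condition yields an index $k$ with $w=s_{j_1}\cdots\widehat{s_{j_k}}\cdots s_{j_{r+1}}$, still reduced. If $k>q$, this expression factors as $v'\cdot u$ with $u\in W_I$ and reduced lengths adding to $\ell(w)$; uniqueness of the parabolic decomposition then forces $w'=v'$, contradicting $\delta=1$. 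So $k\leq q$, and $w=\tilde w'\cdot v''$ reduced with $\ell(\tilde w')=q-1$. Writing $\tilde w'=ab$ with $a\in W^I$, $b\in W_I$ and $\ell(\tilde w')=\ell(a)+\ell(b)$, the equality $wW_I=aW_I$ forces $w'=a$ and $w''=bv''$; since $\ell(w')=q-1=\ell(\tilde w')$ we get $\ell(b)=0$, hence $b=e$ and $w''=v''$, as desired.

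The main obstacle will be the bookkeeping in the $\delta=1$ case: one must verify that each piece of the expression for $w$ remains reduced after deleting $s_{j_k}$ and correctly identify the $W^I$-$W_I$ factorization of $w$ via the uniqueness of the parabolic decomposition. The rest of the argument is a clean case split driven by the single parameter $\delta$.
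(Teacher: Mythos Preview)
Your proof is correct and reaches the same conclusion as the paper, but the argument for the key case $\delta=1$ is genuinely different. The paper proves $v''=w''$ by induction on $\ell(v'')$, using Deodhar's lifting property (\cite[Theorem 1.1]{Deo}): picking $s\in I$ with $v''s<v''$, one shows that either $w=vs$ (contradicting $\delta=1$) or $ws\to vs$ is again an edge with the same value of $\delta$ and shorter $W_I$-parts, so induction applies. Your approach instead takes a reduced expression for $v$ adapted to the factorization $v=v'v''$ and applies the strong exchange condition once to locate the deleted letter; a short uniqueness argument for the $W^I\times W_I$ decomposition then forces $w''=v''$. Your route is arguably more elementary and self-contained---it avoids both the induction and the appeal to Deodhar's lifting lemma---while the paper's route meshes naturally with the other citations to \cite{Deo} already in play. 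Either way, the same structural fact emerges: an $I$-relevant edge satisfies $v'=w'$ or $v''=w''$.
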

\begin{proof}
If $w\ragamma v$ is $I$-relevant, then $\ell(v)=\ell(w)+1$ and $\ell(v'')\geq \ell(w'')$, so clearly $\ell(v')\leq \ell(w')+1$.

Conversely, if $\ell(v')=\ell(w')$ then $v'=w'$ because of Lemma \ref{poset}. Therefore $v''>w''$ by Lemma \ref{deodhar} and $w\ragamma v$ must be $I$-relevant.

It remains to consider the case $\ell(v')=\ell(w')+1$, or equivalently $\ell(v'')=\ell(w'')$. We claim that in this case we have $v''=w''$, whence in particular $w\ragamma v$ is $I$-relevant.
The claim is proven by induction on $\ell(v'')=\ell(w'')$. The case $\ell(v'')=0$ is clear.
Let $s\in I$ such that $v''s<v''$. This implies, again by \cite[Theorem 1.1]{Deo}, that $w\leq vs$ or $ws\leq vs$.
If $w\leq vs< v$, then $w=vs$. Thus we have $w'=(vs)'=v'$, which is a contradiction since $\ell(v')=\ell(w')+1$.
If $ws\leq vs$ then $ws\xrightarrow{s(\gamma)^\vee}vs$ is an edge in $\BB$. Since $v'=(vs)'$ and $w'=(ws)'$ we have $\ell((vs)')=\ell((ws)')+1$ and $\ell((ws)'')=\ell((vs)'')=\ell(v'')-1$.
Hence we can apply the inductive hypothesis to get $w''s=v''s$, thus $w''=v''$.
\end{proof}

In other words, the proof of Lemma \ref{crucial} shows that an edge $w\ragamma v$ in $\BB$ is $I$-relevant if and only if $v'=w'$ or $v''=w''$.

\begin{dfn}
The $I$-\emph{degenerate Bruhat Graph} $\dbg{I}$ is a graph having the same vertices as the Bruhat graph $\BB$. The edges in $\dbg{I}$ are the $I$-relevant edges in $\BB$: for any $I$-relevant edge $w\ragamma v$ in $\BB$ 
we put an edge $w\xrightarrow{\pi_I(\gamma^\vee)}v$ in $\dbg{I}$.
\end{dfn}

In particular, in the case $I=S\setminus\{s\}$ the edges in $\dbg{I}$ are all labeled by $m\alpha_s$, with $m\in \mathbb{N}_{>0}$,  or by a root in $\Phi(I)$.

\begin{ex}
Let $\Phi$ be the root system of type $A_2$ as in the Example \ref{Graph} and let $I=\{t\}$. 
Then $ts$ does not $I$-dominate $t$, although $ts>t$ in the Bruhat order. In fact $(ts)''=e\not>t=t''$. Thus the edge $t\lra ts$ is not $\{t\}$-relevant. The degenerate Bruhat graph $\dbg{\{t\}}$ is:

\begin{center}
 \begin{tikzpicture}
 	\node[shape=circle,draw=black,minimum height=8mm,minimum width=8mm] (e) at (0,0) {$e$};
    \node[shape=circle,draw=black,minimum height=8mm,minimum width=8mm] (s) at (-1,1) {$s$};
    \node[shape=circle,draw=black,minimum height=8mm,minimum width=8mm] (t) at (1,1) {$t$};
    \node[shape=circle,draw=black,minimum height=8mm,minimum width=8mm] (st) at (1,2.5) {$st$};
    \node[shape=circle,draw=black,minimum height=8mm,minimum width=8mm] (ts) at (-1,2.5) {$ts$};
    \node[shape=circle,draw=black,minimum height=8mm,minimum width=8mm] (sts) at (0,3.5) {$sts$};
    
    \path [->,color=red, pos=0.7] (e) edge node[scale=0.8,below] {$\alpha$} (s);
    \path [->,color=blue, pos=0.7] (e) edge node[scale=0.8,below] {$\beta$} (t);
    \path [->,color=blue, pos=0.7] (s) edge node[scale=0.8,above] {$\beta$} (st);
    \path [->,color=red, pos=0.5] (s) edge node[scale=0.8,left] {$\alpha$} (ts);
    \path [->,color=red, pos=0.5] (t) edge node[scale=0.8,right] {$\alpha$} (st);
    \path [->,color=red, pos=0.3] (st) edge node[scale=0.8,above] {$\alpha$} (sts);
   	\path [->,color=blue, pos=0.3] (ts) edge node[scale=0.8,above] {$\beta$} (sts);
 \end{tikzpicture}
\end{center}
\end{ex}

The graph $\dbg{I}$ describes a new action $\stackrel{I}{\cdot}$ of $\lambda$ on $H^*(X,\kk)$. We say
$$\lambda\stackrel{I}{\cdot} P_w =\sum_{w\ralabel{\delta} v\in\dbg{I}}\langle \lambda,\delta\rangle P_{v}$$
where the sum runs over all edges $w\ralabel{\delta} v$ starting in $w$ in $\dbg{I}$ (or equivalently all $I$-relevant edges starting in $w$ in $\BB$).
We call it the $I$\emph{-degenerate action} of $\lambda$.

The new graph $\dbg{I}$ can be obtained as product of two smaller graphs. In fact, we have $\dbg{I}\cong\BB^I\times \mathfrak{B}_{\Phi(I)}$:
at the level of vertices we have a bijection $W=W^I\times W_I$ and, because of Lemma \ref{crucial}, for any $I$-relevant edge $w\ragamma v$ we have two cases:
\begin{itemize}
 \item $w'=v'$ and $w''t_\gamma=v''$, so $w\xrightarrow{\pi_I(\gamma^\vee)} v$ comes from the edge $w''\ragamma v''$ in $\mathfrak{B}_{\Phi(I)}$;
 \item $w''=v''$ and $w't_{w''(\gamma)}=v'$, so $w\xrightarrow{\pi_I(\gamma^\vee)} v$ comes from the edge $w'\xrightarrow{\pi_I(w''(\gamma)^\vee)}v'$ in $\BB^I$.
\end{itemize}

\begin{rmk}
It is not hard to see that the $I$-degenerate action described by $\dbg{I}$ coincides with the action on $H^*(G/P_I\times P_I/B,\kk)\cong H^*(G/P_I,\kk)\otimes H^*(P_I/B,\kk)$ defined as follows:
 if $\lambda=\sum_{i\in S} x_i\varpi_i$, $P_1\in H^*(G/P_I,\kk)$ and 
  $P_2\in H^*(P_I/B,\kk)$  then   
 $$\lambda\stackrel{I}{\cdot} (P_1\otimes P_2)=\Big(\sum_{i\in S\setminus I}x_i\varpi_i\Big)\cdot P_1\otimes P_2+P_1\otimes  \Big(\sum_{i\in I}x_i\varpi_i\Big)\cdot P_2.$$
\end{rmk}

For a polynomial $f\in \kk[x_1,\ldots,x_n]$ we denote by $\deg_i(f)$ its degree in the variable $x_i$ and by $\coeff_{i,a}(f)$ the coefficient of $x_i^a$ in $f$ (thus $\coeff_{i,a}(f)$ is an element of $\kk[x_1,\ldots x_{i-1},x_{i+1},\ldots x_n]$).
We set $\deg_i(0)=-1$.

Recall that the elements of $S$ are labeled as $\{1,2,\ldots,n\}$ and that $\lambda=\sum_i x_i\varpi_i$ is a formal linear combination of the fundamental weights. We set $I=S\setminus\{1\}$. 

We have $\deg_1(\langle\lambda,\gamma^\vee\rangle)=1$ if $\gamma\in \Phi\setminus \Phi(I)$ and $\deg_1(\langle\lambda,\gamma^\vee\rangle)=0$ if $\gamma\in \Phi(I)$. Notice that $\gamma\in \Phi(I)$ if and only if $t_\gamma\in W_I$.

\begin{lemma}\label{relevant}
Let $w,v\in W$ with $\ell(v)>\ell(w)$. Then:
\begin{enumerate}[i)]
\item \label{rel1} $\deg_1(C_{w,v}(\lambda))\leq \ell(v')-\ell(w')$ and we have equality if and only if there exists an $I$-relevant path connecting $w$ to $v$; 
\item \label{rel2}$\displaystyle \coeff_{1,\ell(v')-\ell(w')}(C_{w,v}(\lambda))\cdot x_1^{\ell(v')-\ell(w')}=
\hspace{-6pt}\sum_{\text{relevant}} \langle \lambda,\pi_I(\gamma_1^\vee)\rangle \langle \lambda,\pi_I(\gamma_2^\vee)\rangle\ldots\langle \lambda,\pi_I(\gamma_{k}^\vee)\rangle$,\\
where the sum runs over all the $I$-relevant paths $w\ralabel{\gamma_1^\vee}w_1\ralabel{\gamma_2^\vee}w_2\ralabel{\gamma_3^\vee}\ldots\ralabel{\gamma_k^\vee}v$ connecting $w$ to $v$ in $\BB$.

\end{enumerate}
\end{lemma}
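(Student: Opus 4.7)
The plan is to prove both parts simultaneously by analyzing the $x_1$-degree edge by edge along each path, then summing. Start from the expansion
$$C_{w,v}(\lambda) = \sum \langle \lambda,\gamma_1^\vee\rangle\langle \lambda,\gamma_2^\vee\rangle\cdots\langle\lambda,\gamma_k^\vee\rangle,$$
the sum running over paths $w=w_0 \ralabel{\gamma_1^\vee} w_1 \ralabel{\gamma_2^\vee} \cdots \ralabel{\gamma_k^\vee} w_k=v$ in $\BB$, with $k=\ell(v)-\ell(w)$. Since $\lambda=\sum_i x_i\varpi_i$ and $\langle\varpi_1,\alpha_s^\vee\rangle=\delta_{1,s}$, the factor $\langle\lambda,\gamma^\vee\rangle$ has $\deg_1=1$ when $\gamma\notin\Phi(I)$ and $\deg_1=0$ when $\gamma\in\Phi(I)$.

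The key lemma is an edge-by-edge inequality: for every edge $w_{j-1}\ralabel{\gamma_j^\vee} w_j$ of $\BB$,
$$\deg_1\langle \lambda,\gamma_j^\vee\rangle \leq \ell(w_j')-\ell(w_{j-1}'),$$
with equality if and only if the edge is $I$-relevant. I would verify this by splitting into three cases, combining the characterization of $I$-relevance given just after Lemma \ref{crucial} with the observation that $t_{\gamma_j}\in W_I \Longleftrightarrow \gamma_j\in\Phi(I)$: (a) $\gamma_j\in\Phi(I)$, forcing $w_j'=w_{j-1}'$ (hence relevant, both sides $0$); (b) $\gamma_j\notin\Phi(I)$ with $w_j''=w_{j-1}''$ (relevant, both sides $1$); (c) $\gamma_j\notin\Phi(I)$ and the edge is not $I$-relevant, in which case Lemma \ref{crucial} gives $\ell(w_j')-\ell(w_{j-1}')\geq 2$ while $\deg_1=1$. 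Summing this inequality along a path telescopes the right-hand side to $\ell(v')-\ell(w')$, with equality precisely when every edge of the path is $I$-relevant, i.e.\ when the path itself is $I$-relevant. Aggregating over paths yields (i).

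For part (ii), I would extract the coefficient of $x_1^{\ell(v')-\ell(w')}$ from the path sum. By the previous step, only $I$-relevant paths can contribute to this coefficient. For such a path, on edges of type (a) the factor $\langle\lambda,\gamma_j^\vee\rangle$ is already $x_1$-free and coincides with $\langle\lambda,\pi_I(\gamma_j^\vee)\rangle$ since $\pi_I$ fixes coroots in $\Z\Phi(I)^\vee$; on edges of type (b), writing $\gamma_j^\vee = c_1\alpha_1^\vee+\sum_{s\in I}c_s\alpha_s^\vee$ with $c_1\neq 0$, the leading $x_1$-term of $\langle\lambda,\gamma_j^\vee\rangle$ is $c_1 x_1 = \langle\lambda,\pi_I(\gamma_j^\vee)\rangle$ by definition of $\pi_I$. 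Multiplying the leading terms over all edges gives exactly $\prod_j\langle \lambda, \pi_I(\gamma_j^\vee)\rangle$, and summing over $I$-relevant paths gives the formula claimed in (ii).

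The main obstacle is purely organizational: the edge analysis in (c), where one must ensure that a non-$I$-relevant edge really forces $\ell(w_j')-\ell(w_{j-1}')\geq 2$ (rather than merely $\geq 1$), which is exactly what Lemma \ref{crucial} was designed for. Once that is in place, the rest is a transparent degree-counting and coefficient-extraction argument; the ``if'' direction of (i) — that having some $I$-relevant path actually realizes the bound — then reads off from the explicit leading-coefficient formula in (ii).
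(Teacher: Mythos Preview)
Your proposal is correct and follows essentially the same approach as the paper: an edge-by-edge trichotomy (the cases you label (a), (b), (c)) using Lemma~\ref{crucial} to handle the non-relevant case, then passing to arbitrary paths via the product expansion of $C_{w,v}(\lambda)$. The only cosmetic difference is that the paper phrases the reduction as ``single edge first, then the general case follows from $C_{w,v}(\lambda)=\sum \prod C_{w_{j-1},w_j}(\lambda)$'' whereas you telescope $\sum_j\bigl(\ell(w_j')-\ell(w_{j-1}')\bigr)$ along each path; these are the same argument, and your treatment of~(ii) and of the ``if'' direction in~(i) is in fact slightly more explicit than the paper's.
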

\begin{proof}
\emph{i)} We start with the case $\ell(v)=\ell(w)+1$. If there are no edges connecting $w$ to $v$ in $\BB$ then there is nothing to show.

Assume that there is an edge $w\ragamma v$ in $\BB$, so that $C_{w,v}(\lambda)=\langle \lambda,\gamma^\vee\rangle$. 
If $w\ragamma v$ is not $I$-relevant by Lemma \ref{crucial} we have $\ell(v')-\ell(w')\geq 2$, and the statement follows since $\deg_1(C_{w,v}(\lambda))\leq 1$.

Assume now that $w\ragamma v$ is $I$-relevant, then $w'=v'$ or $w''=v''$. 
Since $w'w''t_\gamma=v'v''$ we see that $w'=v'$ if and only if $t_\gamma\in W_I$, i.e. if and only if $\deg_1(C_{w,v}(\lambda))=0$.

The general case $\ell(v)>\ell(w)+1$ follows since 
$$C_{w,v}(\lambda)=\sum C_{w,w_1}(\lambda)C_{w_1,w_2}(\lambda)\ldots C_{w_{k-1},v}(\lambda)$$
where the sum runs over all paths $w\lra w_1\lra w_2\lra\ldots\lra v$ in $\BB$.

\emph{ii)} We start with the case $\ell(v)=\ell(w)+1$. If there are no $I$-relevant edges in $\BB$ between $w$ and $v$ then both sides are $0$. 
If there is an $I$-relevant edge $w\ragamma v$, then $C_{w,v}(\lambda)=\langle \lambda,\gamma^\vee\rangle$ and 
$$\displaystyle \coeff_{1,\ell(v')-\ell(w')}(\langle \lambda,\gamma^\vee\rangle)\cdot x_1^{\ell(v')-\ell(w')}=\langle \lambda,\pi_I(\gamma^\vee)\rangle.$$
The general case $\ell(v)>\ell(w)+1$ easily follows.
\end{proof}

We fix now an arbitrary $k\in \{1,2,\ldots,d\}$. Let $D_k^{(1)}(\lambda)$ be the hard Lefschetz determinant of the $I$-degenerate action of $\lambda$ on $H^{d-k}(X,\kk)$, described by $\dbg{I}$, computed in the same basis used for $D_k(\lambda)$.
In other words $D_k^{(1)}(\lambda)$ is the determinant of the map $\lambda^k:H^{d-k}(G/P_I\times P_I/B)\ra H^{d+k}  (G/P_I\times P_I/B)$ described above.

\begin{lemma}\label{09}Let $\displaystyle M_k=\sum_{\ell(v)={(d+k)/2}}l(v')-\sum_{\ell(w)={(d-k)/2}}l(w')$. Then:
\begin{enumerate}[i)]\item $\deg_1(D_k(\lambda))\leq M_k $;
\item The polynomial $D_k^{(1)}(\lambda)$ is homogeneous of degree $M_k$ in $x_1$;
\item $\coeff_{1,M_k}(D_k(\lambda))\cdot x_1^{M_k}=D_k^{(1)}(\lambda)$.
\end{enumerate}
\end{lemma}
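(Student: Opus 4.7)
The plan is to expand both determinants via the Leibniz formula and apply Lemma \ref{relevant} entry-wise, then track the degrees in $x_1$ carefully.

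First I would write out $D_k(\lambda)$ as
$$D_k(\lambda)=\sum_{\sigma}\sgn(\sigma)\prod_{w} C_{w,\sigma(w)}(\lambda),$$
where $w$ runs over the basis elements of length $(d-k)/2$ and $\sigma$ is a bijection onto the basis elements of length $(d+k)/2$. Part \emph{i)} of Lemma \ref{relevant} gives $\deg_1(C_{w,\sigma(w)}(\lambda))\leq \ell(\sigma(w)')-\ell(w')$, so each summand has $x_1$-degree at most
$$\sum_w\bigl(\ell(\sigma(w)')-\ell(w')\bigr)=\sum_{\ell(v)=(d+k)/2}\ell(v')-\sum_{\ell(w)=(d-k)/2}\ell(w')=M_k,$$
since $\sigma$ is a bijection. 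This proves \emph{i)}.

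For \emph{ii)} I would observe that the entries of the matrix giving $D_k^{(1)}(\lambda)$ are exactly the numbers
$$C^{(1)}_{w,v}(\lambda):=\sum_{\text{$I$-relevant}}\langle\lambda,\pi_I(\gamma_1^\vee)\rangle\cdots\langle\lambda,\pi_I(\gamma_k^\vee)\rangle,$$
the sum running over $I$-relevant paths from $w$ to $v$ in $\BB$ (this is just the expansion of $\lambda^k\stackrel{I}{\cdot}P_w$ in the Schubert basis using $\dbg{I}$). Each factor $\langle\lambda,\pi_I(\gamma_j^\vee)\rangle$ is homogeneous of degree $1$ in $x_1$ if $\gamma_j\notin\Phi(I)$ and of degree $0$ if $\gamma_j\in\Phi(I)$; by Lemma \ref{crucial} the number of $j$ with $\gamma_j\notin\Phi(I)$ along any $I$-relevant path from $w$ to $v$ equals $\ell(v')-\ell(w')$. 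Hence $C^{(1)}_{w,v}(\lambda)$ is homogeneous in $x_1$ of degree $\ell(v')-\ell(w')$, and summing over a permutation $\sigma$ in the Leibniz expansion of $D_k^{(1)}(\lambda)$ gives homogeneous degree $M_k$, as wanted.

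For \emph{iii)}, the key observation is that in the Leibniz expansion of $D_k(\lambda)$, a summand indexed by $\sigma$ can contribute to $\coeff_{1,M_k}$ only if every factor $C_{w,\sigma(w)}(\lambda)$ attains its maximal possible $x_1$-degree $\ell(\sigma(w)')-\ell(w')$; if even one factor falls short, the product has $x_1$-degree strictly less than $M_k$. Therefore
$$\coeff_{1,M_k}(D_k(\lambda))\cdot x_1^{M_k}=\sum_\sigma\sgn(\sigma)\prod_w\Bigl[\coeff_{1,\ell(\sigma(w)')-\ell(w')}(C_{w,\sigma(w)}(\lambda))\cdot x_1^{\ell(\sigma(w)')-\ell(w')}\Bigr],$$
and part \emph{ii)} of Lemma \ref{relevant} identifies the bracketed expression with $C^{(1)}_{w,\sigma(w)}(\lambda)$. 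The right-hand side is exactly the Leibniz expansion of $D_k^{(1)}(\lambda)$, which finishes \emph{iii)}. Notice that permutations for which some $C^{(1)}_{w,\sigma(w)}(\lambda)=0$ (i.e.\ no $I$-relevant path exists) are precisely the ones dropped above, so nothing is over- or under-counted.

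The only real subtlety is making sure the coefficient extraction distributes correctly over the product of factors, which relies on the fact that each factor's top-$x_1$ coefficient behaves additively under multiplication exactly when no factor is below its maximum; this dichotomy is the main bookkeeping step but is automatic from Lemma \ref{relevant} once phrased this way.
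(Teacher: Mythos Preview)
Your proof is correct and follows essentially the same approach as the paper: both expand $D_k(\lambda)$ and $D_k^{(1)}(\lambda)$ via the Leibniz formula over bijections $\sigma$ and apply Lemma~\ref{relevant} entrywise to bound and then identify the top $x_1$-coefficient. Your version simply spells out the bookkeeping (in particular the homogeneity argument for \emph{(ii)} via Lemma~\ref{crucial} and the ``each factor must attain its maximal degree'' step for \emph{(iii)}) that the paper leaves implicit.
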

\begin{proof}
The determinant polynomial can be expressed as 
$$D_k(\lambda)=\sum_{\sigma}\sgn(\sigma)C_{w_1,\sigma(w_1)}(\lambda)C_{w_2,\sigma(w_2)}(\lambda)\ldots C_{w_{n(k)},\sigma(w_{n(k)})}(\lambda)$$
where $\sigma$ runs over all possible bijections between elements in $W$ of length $(d-k)/2$ and $(d+k)/2$  (and the sign is determined by the chosen order of the Schubert basis). Then \emph{(i)} follows from Lemma \ref{relevant}. 

The terms in the sum which contribute to $\coeff_{1,M_k}(D_k(\lambda))$ are precisely the ones coming from $I$-relevant paths, i.e. the one which are also in $D^{(1)}_k(\lambda)$, so \emph{(ii)} and \emph{(iii)} follow.
\end{proof}

We can now reiterate this procedure.
Let $S=I_0\supset I_1\supset I_2\supset\ldots \supset I_n=\emptyset$ with $I_{j-1}\setminus I_{j}=\{j\}$ for any $1\leq j \leq n$. We have a length preserving bijection of sets:
$$\Psi:W\cong W^{I_1}\times W_{I_1}^{I_2}\times \ldots \times W_{I_{n-1}}.$$
We write $\Psi(w)=\left(w^{(1)},w^{(2)},\ldots,w^{(n)}\right)$.
The degenerated graph $\BB^{(1)}:=\dbg{I_1}$ is isomorphic to $\mathfrak{B}_{\Phi}^{I_1}\times\mathfrak{B}_{\Phi(I_1)}$. It can be degenerated again into  $\BB^{(2)}:=\mathfrak{B}_{\Phi}^{I_1}\times\mathfrak{B}_{\Phi(I_1)}^{I_2\text{-deg}}\cong 
\mathfrak{B}_{\Phi}^{I_1}\times\mathfrak{B}_{\Phi(I_1)}^{I_2}\times \mathfrak{B}_{\Phi(I_2)}$, and so on up to $\BB^{(n-1)}:=\mathfrak{B}_{\Phi}^{I_1}\times\mathfrak{B}_{\Phi(I_1)}^{I_2}\times \ldots \times\mathfrak{B}_{\Phi(I_{n-1})}$.
We set $\BB^{(0)}:=\BB$ and $\BB^{(n)}:=\BB^{(n-1)}$.

\begin{dfn}
Each of the $\BB^{(j)}$ describes a new action of $\lambda$ on $H^*(X,\kk)$, which we call the \emph{$j^\text{th}$-degenerate action} and we denote by $\stackrel{j}{\cdot}$. We say that $v$ \textit{$j$-dominates} $w$ if $v^{(i)}\geq w^{(i)}$ for any $i\leq j$ and $v^{(j+1)}\ldots v^{(n)}\geq w^{(j+1)}\ldots w^{(n)}$.

We say that an edge $w\ragamma v$ is $j$-\textit{relevant} if $v$ $j$-dominates $w$. A path connecting $w$ to $v$ is $j$-relevant if all its edges are $j$-relevant.
\end{dfn}

For $1\leq j\leq n$, let $C_{w,v}^{(j)}(\lambda)$  be the coefficient of $P_v$ in $\lambda^h\stackrel{j}{\cdot}P_w$, where $\ell(v)-\ell(w)=h$. 
Thus Lemma \ref{relevant}.\ref{rel2} can be restated as: 
$$\coeff_{1,\ell(v^{(1)})-\ell(w^{(1)})}(C^{(0)}_{w,v}(\lambda))\cdot x_1^{\ell(v^{(1)})-\ell(w^{(1)})}=
C_{w,v}^{(1)}(\lambda).$$
We also have:
\begin{lemma}\label{Dkiter} Let $w,v\in W$ with $\ell(v)>\ell(w)$ and $0\leq j\leq n-1$. Then:
\begin{enumerate}[i)]
\item $\displaystyle \deg_{j+1}C^{(j)}_{w,v}(\lambda)\leq \ell(v^{(j+1)})-\ell(w^{(j+1)})$ and the equality holds if and only if there is a $(j+1)$-relevant path connecting $v$ and $w$;
\item $\displaystyle \coeff_{j+1,\ell(v^{(j+1)})-\ell(w^{(j+1)})}(C_{w,v}^{(j)}(\lambda))\cdot x_{j+1}^{\ell(v^{(j+1)})-\ell(w^{(j+1)})}=
C_{w,v}^{(j+1)}(\lambda)$;
\item $C_{w,v}^{(j+1)}(\lambda)$, regarded as a polynomial in $x_i$, is homogeneous of degree $\ell( v^{(i)})-\ell(w^{(i)})$ for $1\leq i\leq j+1$.
\end{enumerate}
\end{lemma}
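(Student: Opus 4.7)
The plan is to prove the lemma by induction on $j\geq 0$. The base case $j=0$ is essentially Lemma \ref{relevant}: parts (i) and (ii) are Lemma \ref{relevant}.\ref{rel1} and Lemma \ref{relevant}.\ref{rel2}, while (iii) only concerns $C^{(1)}_{w,v}(\lambda)$ viewed as a polynomial in $x_1$ and is immediate from the explicit expression in (ii), which writes $C^{(1)}_{w,v}(\lambda)$ as $x_1^{\ell(v^{(1)})-\ell(w^{(1)})}$ times a polynomial in $x_2,\ldots,x_n$. For the inductive step my aim is to reduce the assertion at level $j+1$ to an instance of Lemma \ref{relevant} applied not to $\Phi$ itself but to the sub-root system $\Phi(I_j)$, combined with the inductive hypothesis.

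The key structural ingredient is the product decomposition already established in the excerpt,
$$\BB^{(j)}\;\cong\;\mathfrak{B}_{\Phi}^{I_1}\times\mathfrak{B}_{\Phi(I_1)}^{I_2}\times\cdots\times\mathfrak{B}_{\Phi(I_{j-1})}^{I_j}\times\mathfrak{B}_{\Phi(I_j)}.$$
Paths in $\BB^{(j)}$ from $w$ to $v$ are interleavings of paths in the individual factors, and the $j$-th degenerate action of $\lambda$ splits as a sum of pairwise commuting actions, one on each factor. Consequently $C^{(j)}_{w,v}(\lambda)$ admits a multinomial-weighted expansion as a sum of products, one product per way of distributing the $\ell(v)-\ell(w)$ steps among the factors. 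Crucially, an edge in the factor $\mathfrak{B}_{\Phi(I_{i-1})}^{I_i}$ for $i\leq j$ is labelled by a positive integer multiple of $\alpha_i^\vee$, so its pairing with $\lambda$ is a polynomial in $x_i$ only; an edge in the last factor $\mathfrak{B}_{\Phi(I_j)}$ is labelled by a coroot $\gamma^\vee$ with $\gamma\in\Phi(I_j)$, and pairs with $\lambda$ to produce a polynomial in $x_{j+1},\ldots,x_n$ only. Hence the entire $x_{j+1}$-dependence of $C^{(j)}_{w,v}(\lambda)$ is concentrated in the last-factor contributions.

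At this point I apply Lemma \ref{relevant} (together with the argument of Lemma \ref{09}) to the root system $\Phi(I_j)$, taking the distinguished simple root to be $\alpha_{j+1}$ and the parabolic set to be $I_j\setminus\{j+1\}=I_{j+1}$. This yields, for the last-factor contribution: the degree in $x_{j+1}$ is at most $\ell(v^{(j+1)})-\ell(w^{(j+1)})$; equality holds precisely when there is an $I_{j+1}$-relevant sub-path from $w^{(j+1)}\cdots w^{(n)}$ to $v^{(j+1)}\cdots v^{(n)}$ in $\mathfrak{B}_{\Phi(I_j)}$; and the corresponding leading coefficient, multiplied by $x_{j+1}^{\ell(v^{(j+1)})-\ell(w^{(j+1)})}$, equals the sum over the once-more-degenerated graph $\mathfrak{B}_{\Phi(I_j)}^{I_{j+1}\text{-deg}}$. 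Reassembling the product decomposition then gives (i) and (ii); for (iii) at level $j+1$ I combine the homogeneity in $x_{j+1}$ provided by (ii) with the homogeneity in $x_1,\ldots,x_j$ granted by the inductive hypothesis.

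The main obstacle I expect is the bookkeeping needed to identify $(j+1)$-relevance of a path in $\BB$ with the conjunction of $j$-relevance and the $I_{j+1}$-relevance of its projection to the last factor $\mathfrak{B}_{\Phi(I_j)}$. This identification rests on the dichotomy established in the proof of Lemma \ref{crucial}: along each step of a relevant edge either all earlier factor-coordinates are preserved, in which case the step acts in the last factor and the extra $(j+1)$-relevance condition translates directly into $I_{j+1}$-relevance inside $\mathfrak{B}_{\Phi(I_j)}$; or the last-factor coordinate is preserved, in which case the $(j+1)$-relevance condition is automatic. Threading this dichotomy through the iterated degeneration requires some care but introduces no new combinatorial input beyond Lemma \ref{crucial}.
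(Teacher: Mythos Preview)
Your proposal is correct and follows essentially the same approach as the paper: the paper's proof of (i) and (ii) is the one-line remark that ``the same arguments as in the proof of Lemma \ref{relevant}'' apply, and your unpacking via the product decomposition $\BB^{(j)}\cong\mathfrak{B}_{\Phi}^{I_1}\times\cdots\times\mathfrak{B}_{\Phi(I_{j-1})}^{I_j}\times\mathfrak{B}_{\Phi(I_j)}$, together with the application of Lemma \ref{relevant} to the last factor $\mathfrak{B}_{\Phi(I_j)}$ (with distinguished simple root $\alpha_{j+1}$ and parabolic subset $I_{j+1}$), is exactly what that remark means. For (iii) both you and the paper argue by induction on $j$ using (ii).
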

\begin{proof}
The same arguments as in the proof of Lemma \ref{relevant} show \textit{(i)} and \textit{(ii)}. Now \textit{(iii)} follows by induction on $j$ using \textit{(ii)}.
\end{proof}

For $0\leq j\leq n$ let $D_k^{(j)}(\lambda)$ be the hard Lefschetz determinant obtained from the $j^\text{th}$-degenerate action of $\lambda$, computed in the same bases used for $D_k(\lambda)$. 
We have
\begin{equation}\label{dkj}D_k^{(j)}(\lambda)=\sum_{\sigma}\sgn(\sigma)C^{(j)}_{w_1,\sigma(w_1)}(\lambda)C^{(j)}_{w_2,\sigma(w_2)}(\lambda)\ldots C^{(j)}_{w_{n(k)},\sigma(w_{n(k)})}(\lambda).
\end{equation}

For any $1\leq j\leq n$ let $\displaystyle 
M^{(j)}_k=\sum_{\ell(v)=\frac{d+k}2}\ell(v^{(j)})-\sum_{\ell(w)=\frac{d-k}2}\ell(w^{(j)})$.

\begin{lemma}\label{iter} For any $0\leq j\leq n-1$ we have:
\begin{enumerate}[i)]
\item  $\deg_{j+1} D_k^{(j)}(\lambda)\leq M_k^{(j+1)}$;
\item\label{22} $D_k^{(j)}(\lambda)$ is homogeneous of degree $M_k^{(i)}$ in $x_i$ for $1\leq i\leq j$;
\item\label{33} $\coeff_{j+1,M_k^{(j+1)}}D_k^{(j)}(\lambda)\cdot x_{j+1}^{M_k^{(j+1)}}=D_k^{(j+1)}(\lambda)$.
\end{enumerate}
\end{lemma}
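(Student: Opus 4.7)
The plan is to deduce each part directly from the determinant expansion in \eqref{dkj} by applying the corresponding part of Lemma \ref{Dkiter} entry-by-entry. No induction on $j$ is required: each part of Lemma \ref{Dkiter} controls the matrix entries $C^{(j)}_{w,v}(\lambda)$ in exactly the way needed to conclude the matching part of the statement.

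For \emph{(i)}, I would bound the $x_{j+1}$-degree of a single summand of \eqref{dkj} by summing the bounds of Lemma \ref{Dkiter}\emph{(i)}:
$$\deg_{j+1}\prod_{m} C^{(j)}_{w_m,\sigma(w_m)}(\lambda) \;\leq\; \sum_m\bigl(\ell(\sigma(w_m)^{(j+1)})-\ell(w_m^{(j+1)})\bigr) = M_k^{(j+1)},$$
where the final equality uses that $\sigma$ is a bijection between the elements of length $(d-k)/2$ and those of length $(d+k)/2$, so the positive and negative contributions collect into the definition of $M_k^{(j+1)}$.

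For \emph{(ii)}, I would apply Lemma \ref{Dkiter}\emph{(iii)}: every $C^{(j)}_{w,v}(\lambda)$ is homogeneous in $x_i$ of degree $\ell(v^{(i)})-\ell(w^{(i)})$ for $1\leq i\leq j$. The same bijection argument as above shows that each summand of \eqref{dkj}, and hence $D_k^{(j)}(\lambda)$ itself, is homogeneous in $x_i$ of degree $M_k^{(i)}$.

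For \emph{(iii)}, I would argue that the summands in \eqref{dkj} contributing to the top coefficient $\coeff_{j+1,M_k^{(j+1)}}$ are exactly those $\sigma$ for which each factor $C^{(j)}_{w_m,\sigma(w_m)}(\lambda)$ individually attains its degree bound. Indeed, since the factor-wise upper bounds already sum to $M_k^{(j+1)}$, the product can attain the top degree only if every factor does; this is the single honestly non-routine observation in the proof, and is the main obstacle to overcome. By Lemma \ref{Dkiter}\emph{(ii)}, for each such factor the top $x_{j+1}$-coefficient, multiplied by the appropriate power of $x_{j+1}$, is exactly $C^{(j+1)}_{w_m,\sigma(w_m)}(\lambda)$. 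Multiplying these together and summing over $\sigma$ reassembles the analogue of \eqref{dkj} for $D_k^{(j+1)}(\lambda)$, completing the identification.
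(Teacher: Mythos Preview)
Your proposal is correct and follows exactly the route the paper indicates: the paper's own proof simply says ``Using \eqref{dkj} and Lemma \ref{Dkiter} this follows arguing just as in Lemma \ref{09},'' and what you have written is precisely the detailed unpacking of that argument, applying Lemma \ref{Dkiter} factor-by-factor to each summand of the determinant expansion. Your key observation for part \emph{(iii)}---that because the entrywise degree bounds already sum to $M_k^{(j+1)}$, the only contribution to the top coefficient of a product comes from the product of top coefficients---is exactly the mechanism behind the paper's sentence ``the terms in the sum which contribute \ldots are precisely the ones coming from $I$-relevant paths'' in the proof of Lemma \ref{09}.
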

\begin{proof}
Using \eqref{dkj} and Lemma \ref{Dkiter} this follows arguing just as in Lemma \ref{09}.
\end{proof}

Let $\displaystyle \mu_k=x_1^{M^{(1)}_k}x_2^{M^{(2)}_k}\cdot\ldots\cdot x_n^{M^{(n)}_k}$. We have the following:

\begin{cor}\label{corbrutto}
All monomials in  $D_k(\lambda)=D_k(x_1,\ldots,x_n)$ are smaller than $\mu_k$ in the lexicographic order.

The polynomial $D_k^{(n-1)}(\lambda)$ (which is equal to $D_k^{(n)}(\lambda)$) is homogeneous of degree $M_k^{(j)}$ in $x_j$ for any $1\leq j\leq n$, i.e. $D_k^{(n-1)}(\lambda)=R_k\mu_k$, with $R_k\in \kk$, and 
the coefficient of the monomial  $\mu_k$ in $D_k(\lambda)$ is $R_k$.
\end{cor}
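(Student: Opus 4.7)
The proof is essentially bookkeeping on top of Lemma \ref{iter}: I would iterate that lemma through all $n$ degeneration stages. There is no significant obstacle; the one point requiring a little care is the very last variable $x_n$, where I need to exploit the definitional equality $\BB^{(n)}=\BB^{(n-1)}$.

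First I would show that $D_k^{(n-1)}$ is a scalar multiple of $\mu_k$. At each stage $0\leq j\leq n-1$, Lemma \ref{iter} provides three complementary pieces of information: by part (ii), $D_k^{(j)}$ is already homogeneous of degree $M_k^{(i)}$ in $x_i$ for $1\leq i\leq j$; by part (i), its $x_{j+1}$-degree is bounded by $M_k^{(j+1)}$; and by part (iii), the coefficient of $x_{j+1}^{M_k^{(j+1)}}$ times $x_{j+1}^{M_k^{(j+1)}}$ recovers $D_k^{(j+1)}$. Running $j$ from $0$ through $n-1$ shows that $D_k^{(n-1)}$ is homogeneous of degree $M_k^{(i)}$ in $x_i$ for each $1\leq i\leq n-1$. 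Applying Lemma \ref{iter}(iii) one more time with $j=n-1$ and using $\BB^{(n)}=\BB^{(n-1)}$, I get $\coeff_{n,M_k^{(n)}}(D_k^{(n-1)})\cdot x_n^{M_k^{(n)}}=D_k^{(n-1)}$, which forces homogeneity in $x_n$ as well. Being homogeneous of the prescribed degree in each variable individually, $D_k^{(n-1)}$ is a scalar multiple of $\mu_k$; define $R_k\in\kk$ by $D_k^{(n-1)}=R_k\mu_k$.

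Second, composing the coefficient-extraction identities in Lemma \ref{iter}(iii) backwards gives the coefficient of $\mu_k$ in $D_k$. Viewing $D_k$ as a polynomial in $x_1$, the coefficient of $x_1^{M_k^{(1)}}$ is $D_k^{(1)}/x_1^{M_k^{(1)}}$; inside this, the coefficient of $x_2^{M_k^{(2)}}$ is $D_k^{(2)}/(x_1^{M_k^{(1)}}x_2^{M_k^{(2)}})$; iterating, the coefficient of $\mu_k$ in $D_k$ equals $D_k^{(n-1)}/\mu_k=R_k$.

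Finally, the lex-maximality of $\mu_k$ follows from the same iteration, but reading off the degree bounds rather than the leading coefficients. By Lemma \ref{iter}(i) with $j=0$, no monomial of $D_k$ has $x_1$-degree above $M_k^{(1)}$. Among those achieving equality, the remaining $x_2,\ldots,x_n$-factor lies in $D_k^{(1)}/x_1^{M_k^{(1)}}$, whose $x_2$-degree is bounded by $M_k^{(2)}$ via part (i) with $j=1$. Continuing inductively in $j$, no monomial of $D_k$ can be lex-greater than $\mu_k$, which concludes the corollary.
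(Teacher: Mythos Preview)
Your proof is correct and is exactly the intended argument: the paper states this as a corollary without proof, and what you have written is the natural unwinding of Lemma~\ref{iter} through the chain of degenerations. The only step requiring any thought---handling the last variable $x_n$ via the identity $\BB^{(n)}=\BB^{(n-1)}$ to force homogeneity in $x_n$---you have treated correctly.
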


\section{Hard Lefschetz for the maximal parabolic flag varieties}

To show that the polynomials $D_k(\lambda)$ are not identically zero, it suffices now to show that, for some ordering of the simple reflections, we have $R_k=(\mu_k)^{-1}D_k^{(n-1)}(\lambda)\in \kk^*$.
This will be done by investigating whether the $(n-1)^\text{th}$-degenerate action of a weight $\lambda$ has the HLP on $H^*(X,\kk)$. This coincides with the action
on $$H^*(G/P_{I_1},\kk)\otimes H^*(P_{I_1}/P_{I_2},\kk)\otimes\ldots \otimes H^*(P_{I_{n-1}}/B,\kk),$$
 where $\lambda=\sum x_i\varpi_i$ acts as multiplication by
 $$x_1\varpi_1\otimes 1\otimes\ldots\otimes 1+1\otimes x_2\varpi_2\otimes\ldots \otimes1+1\otimes 1\otimes\ldots\otimes x_n\varpi_n.$$

 \begin{ex}\label{5.1}
Let $W=\mathcal{S}_{n+1}$ be a Weyl group of type $A_n$. We label the simple reflections as follows:
\begin{center}
\begin{tikzpicture}
	\node (1) at (0,0) {$1$};
    \node(2) at (1.5,0) {$2$};
    \node (3) at (3,0) {$3$};
    \node (4) at (4.5,0) {$\cdots$};
    \node (5) at (6,0) {$(n-1)$};
    \node (6) at (7.5,0) {$n$} ;
    \path  (1) edge (2);
    \path  (2) edge (3);
    \path  (3) edge (4);
    \path  (4) edge (5);
    \path  (5) edge (6);
    \end{tikzpicture}
\end{center}
Then $P_{I_j}/P_{I_{j+1}}\cong \mathbb{P}^{n+1-j}(\C)$ for all $1\leq j \leq n$. So the degenerate action of $\lambda$ can be thought as multiplication by $\sum_{i=1}^n x_i\varpi_i$ on 
$\kk[\varpi_1,\ldots,\varpi_n]/(\varpi_1^{n+1},\ldots,\varpi_n^2)$.
\end{ex}

The aim of this section is to consider the action of the fundamental weight $\varpi_{j}$ on a single factor $H^*(P_{I_{j-1}}/P_{I_{j}},\kk)$. Notice that $\varpi_{j}$ has the HLP if and only if $x_j\varpi_j$ has the HLP for every (or any) $x_j\in \kk^*$.

We can assume $j=1$. Since we can choose arbitrarily the ordering $\{1,2,\ldots,n\}$ of $S$, for our goals it is enough for every irreducible root system to check the Hard Lefschetz Property on $H^*(G/P_{S\setminus\{1\}},\kk)$
for only one particular choice of  $\{1\}$. 

\begin{prop}\label{riassunto}
Let $\Phi$ be an irreducible root system with simple roots $S$. Then we can always choose $1\in S$ such that $\varpi_1$ has the HLP on $H^*(G/P_{S\setminus\{1\}},\kk)$ for any field of characteristic $p>|\Phi^+|$.
\end{prop}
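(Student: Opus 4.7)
The plan is to prove the proposition by case analysis on the type of the irreducible root system $\Phi$. For each type I would choose a specific simple root $s \in S$ such that the maximal parabolic quotient $G/P_{S\setminus\{s\}}$ has a particularly tractable Schubert calculus, and then argue that $\varpi_s$ acts with the HLP on its cohomology whenever $p > |\Phi^+|$.

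The first step is to reinterpret HLP as a determinantal condition. By Pieri--Chevalley, multiplication by $\varpi_s$ on the Schubert basis of $H^*(G/P_{S\setminus\{s\}}, \Z)$ is described by the parabolic Bruhat graph $\BB^{S\setminus\{s\}}$ of Definition \ref{pbg}, with each edge $w \ragamma v$ labeled by the coefficient of $\alpha_s^\vee$ in the coroot $\gamma^\vee$. Thus HLP over $\kk$ of characteristic $p$ is equivalent to the determinant of each map $\varpi_s^k$ in the Schubert basis being nonzero modulo $p$. Since the classical hard Lefschetz theorem over $\Q$ ensures that these determinants are nonzero integers, the task reduces to bounding their prime factorizations by $|\Phi^+|$.

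For the classical types $A_n, B_n, C_n, D_n$ and the exceptional types $E_6, E_7$ I would take $s$ to be a cominuscule simple root, so that $G/P_{S\setminus\{s\}}$ becomes a cominuscule homogeneous variety (projective space, quadric, Grassmannian, spinor variety, Cayley plane, or Freudenthal variety). In this setting the edge labels of $\BB^{S\setminus\{s\}}$ are small (equal to $1$ in the simply-laced case, and bounded by the ratio of root lengths otherwise), and $W^{S\setminus\{s\}}$ carries a distributive lattice structure by Proctor's theorem. I would then use a parabolic analog of Stembridge's formula \eqref{stemfor} --- with the product running only over $\alpha \in \Phi^+ \setminus \Phi(S\setminus\{s\})$ and heights replaced by the coefficient of $\alpha_s$ in $\alpha$ --- to produce explicit product formulas for the HLP determinants, and verify directly that their prime divisors are at most $|\Phi^+|$.

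The main obstacle is the exceptional types $E_8, F_4, G_2$, which admit no cominuscule simple root. Here I would pick $s$ so that the coefficients $\langle\varpi_s,\gamma^\vee\rangle$ with $\gamma\in\Phi^+\setminus\Phi(S\setminus\{s\})$ remain as small as possible, and analyze $\BB^{S\setminus\{s\}}$ directly. In the small-rank cases $G_2$ (with $|W^{S\setminus\{s\}}|\leq 6$) and $F_4$, the HLP determinants can be computed explicitly by hand or with computer algebra, along the lines of Remark \ref{pleqPhi}. The most delicate case is $E_8$, where both $|W^{S\setminus\{s\}}|$ and the edge labels grow; here I would expect the proof to exploit a structural feature of the parabolic quotient (such as a recursive decomposition, or a well-chosen filtration by sub-Schubert cells) to keep the prime factors of the determinants bounded by $|\Phi^+|=120$.
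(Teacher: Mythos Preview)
Your overall plan---case analysis by type, a determinantal reformulation via the parabolic Bruhat graph $\BB^{S\setminus\{s\}}$, and a specific choice of node $s$---matches the paper's. The execution, however, diverges, and your route has a real gap.

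For the classical types the paper does something far simpler than you propose: with the node $1$ chosen at the end of the diagram, the parabolic Bruhat graph $\BB^{S\setminus\{1\}}$ is a single chain (types $A_n$, $B_n$, $C_n$) or a chain with one diamond in the middle (type $D_n$), all edge labels being $\alpha_1^\vee$ except for a single $2\alpha_1^\vee$ in type $B_n$. One reads off immediately that $\varpi_1$ has the HLP unless $p=2$. No product formulas, no lattice combinatorics, no appeal to Proctor's theorem are needed. Your proposed ``parabolic analog of Stembridge's formula'' is both unnecessary here and, more seriously, insufficient: Stembridge's formula \eqref{stemfor} computes only the top-degree map $\lambda^d$, so an analog of it would give you $D_d$ but not the intermediate determinants $D_k$ for $0<k<d$, which are exactly what HLP requires.

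For the exceptional types the paper does not look for structural arguments at all: it simply computes, via Magma, the finite set of primes for which $\varpi_1$ fails HLP on $H^*(G/P_{S\setminus\{1\}},\kk)$, and checks that each such prime is at most $|\Phi^+|$. This includes $E_8$. Your hope that $E_8$ would yield to ``a recursive decomposition, or a well-chosen filtration'' is not what happens; the paper treats it as a finite computation, and the list of bad primes there ($2,3,5,7,19,29,31,37,41,43,47,53$, all $\leq 120$) does not suggest an obvious structural pattern.
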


\begin{proof}
We set $I=S\setminus \{1\}$. The proof is divides into cases.

\textbf{Case $A_n$:} We label the simple reflections as in Example \ref{5.1}.
We can choose $G=SL_{n+1}(\C)$. 
Then the parabolic flag variety $G/P_I$ is the Grassmannian of lines in $\C^{n+1}$, i.e. it is isomorphic to $\mathbb{P}^n(\C)$. Then $\varpi_1$ has the HLP in $H^*(G/P_I,\kk)\cong \kk[\varpi_1]/(\varpi_1^{n+1})$ for any field $\kk$. 

\textbf{Case $B_n$ and $C_n$:} We label the simple reflections as follows
\begin{center}
\begin{tikzpicture}
 	\node (1) at (0,0) {$1$};
    \node(2) at (1.5,0) {$2$};
    \node (3) at (3,0) {$3$};
    \node (4) at (4.5,0) {$\cdots$};
    \node (5) at (6,0) {$(n-1)$};
    \node (6) at (7.5,0) {$n$} ;
    \path  (1) edge (2);
    \path  (2) edge (3);
    \path  (3) edge (4);
    \path  (4) edge (5);
    \path (6.65,0.05) edge (7.3,0.05);
   	\path (6.65,-0.05) edge (7.3,-0.05);
 \end{tikzpicture}
\end{center}

If $W$ is the Weyl group of type $B_n$ (or $C_n$) is it easy to list all the elements in $W^I$ and to draw the parabolic Bruhat graphs $\mathfrak{B}_{B_n}^I$ and $\mathfrak{B}_{C_n}^I$  (here by $e$ we mean the identity of $W$).

Notice also that $P_I$ is cominuscule in type $B_n$ and minuscule in type $C_n$. The parabolic flag varieties $G/P_I$ are described in detail in these cases in \cite[\S 9.3]{BL}.

\begin{center}
\begin{tabular}{|l | l|}
\hline
\begin{tikzpicture}[scale=0.7]
	\foreach \x in {0,1,2,4.5,5.5,6.5,9}
{\fill (0,\x*1) circle (0.1);}
	\node at (-2.8,9) {$\mathfrak{B}_{B_n}^I$};
 	\node (1) at (0,0) [label=right:{\small $e$}] {};
    \node (2) at (0,1) [label=right:{\small $1$}] {};
    \node (3) at (0,2) [label=right:{\small $21$}] {};
    \node (4) at (0,3.25) {\small \vdots};
    \node (5) at (0,4.5) [label=right:{\small $ (n-1)\ldots21 $}] {};
    \node (6) at (0,5.5) [label=right:{\small $ n(n-1)\ldots21 $}] {};
    \node (7) at (0,6.5) [label=right:{\small $ (n-1)n(n-1)\ldots21 $}] {} ;
    \node (8) at (0,7.75) {\small\vdots};
    \node (9) at (0,9) [label=right:{\small$12\ldots(n-1)n(n-1)\ldots21$}] {};
    \path[->]  (1) edge node[left, scale=0.7]{$\alpha_1^\vee$}(2);
    \path[->]  (2)edge node[left, scale=0.7]{$\alpha_1^\vee$}(3);
    \path[->]  (3) edge node[left, scale=0.7]{$\alpha_1^\vee$}(4);
    \path[->]  (4) edge node[left, scale=0.7]{$\alpha_1^\vee$}(5);
	\path[->,color=blue]  (5) edge node[left, scale=0.7]{$2\alpha_1^\vee$}(6);
 	\path[->]  (6) edge node[left, scale=0.7]{$\alpha_1^\vee$}(7);
 	\path[->]  (7) edge node[left, scale=0.7]{$\alpha_1^\vee$}(8);
 	\path[->]  (8) edge node[left, scale=0.7]{$\alpha_1^\vee$}(9);

 \end{tikzpicture}

& 
\begin{tikzpicture}[scale=0.7]
	\foreach \x in {0,1,2,4.5,5.5,6.5,9}
{\fill (0,\x*1) circle (0.1);}
	\node at (-2.8,9) {$\mathfrak{B}_{C_n}^I$};
 	\node[scale=0.9] (1) at (0,0) [label=right:{\small $e$}] {};
    \node (2) at (0,1) [label=right:{\small $1$}] {};
    \node (3) at (0,2) [label=right:{\small $21$}] {};
    \node (4) at (0,3.25) {\small \vdots};
    \node (5) at (0,4.5) [label=right:{\small $(n-1)\ldots21 $}] {};
    \node (6) at (0,5.5) [label=right:{\small $n(n-1)\ldots21 $}] {};
    \node (7) at (0,6.5) [label=right:{\small $(n-1)n(n-1)\ldots21 $}] {} ;
    \node (8) at (0,7.75) {\small\vdots};
    \node (9) at (0,9) [label=right:{\small $12\ldots(n-1)n(n-1)\ldots21$}] {};
    \path[->]  (1) edge node[left, scale=0.7]{$\alpha_1^\vee$}(2);
    \path[->]  (2)edge node[left, scale=0.7]{$\alpha_1^\vee$}(3);
    \path[->]  (3) edge node[left, scale=0.7]{$\alpha_1^\vee$}(4);
    \path[->]  (4) edge node[left, scale=0.7]{$\alpha_1^\vee$}(5);
	\path[->]  (5) edge node[left, scale=0.7]{$\alpha_1^\vee$}(6);
 	\path[->]  (6) edge node[left, scale=0.7]{$\alpha_1^\vee$}(7);
 	\path[->]  (7) edge node[left, scale=0.7]{$\alpha_1^\vee$}(8);
 	\path[->]  (8) edge node[left, scale=0.7]{$\alpha_1^\vee$}(9);

 \end{tikzpicture}
\\ \hline\end{tabular}
\end{center}

From this it is evident that if $\Phi$ is of type $C_n$ then $\varpi_1$ has the HLP on $H^*(G/P_I, \kk)$ for every field $\kk$, while if $\Phi$ is of type $B_n$ then $\varpi_1$ has the HLP on $H^*(G/P_I,\kk)$ if and only if $\ch(\kk)\neq 2$. 

\textbf{Case $D_n$:} We label the simple reflections as follows:
\begin{center}
\begin{tikzpicture}
 	\node (1) at (0,0) {$1$};
    \node(2) at (1,0) {$2$};
    \node (3) at (2,0) {$3$};
    \node (4) at (3,0) {$\cdots$};
    \node (5) at (4.5,0) {$(n-2)$};
    \node (6) at (6.3,0.7) {$(n-1)$} ;
     \node (7) at (6.3,-0.7) {$n$} ;
    \path  (1) edge (2);
    \path  (2) edge (3);
    \path  (3) edge (4);
    \path  (4) edge (5);
    \path  (5) edge (6);
    \path  (5) edge (7);
    \end{tikzpicture}
\end{center}

If $W$ is the Weyl group of type $D_n$ is it easy to list all the elements in $W^I$ and to draw parabolic Bruhat graph $\mathfrak{B}_{D_n}^I$.
Notice also that $P_I$ is minuscule and the parabolic flag variety $G/P_I$ is described in detail in \cite[\S 9.3]{BL}.

The parabolic Bruhat graph $\mathfrak{B}_{D_n}^I$ is:

\vspace{-10pt}
\begin{center}
\begin{tikzpicture}[scale=0.7]

 	\node[scale=0.9] (1) at (0,0) [label=right:$e$] {};
    \node (2) at (0,1) [label=right:{\small $1$}] {};
    \node (3) at (0,2) [label=right:{\small $21$}] {};
    \node (4) at (0,3.35) {\small \vdots};
    \node (5) at (0,4.5) [label=right:{\small $(n-2)\ldots21 $}] {};
    \node (6) at (+2,5.5) [label=right:{\small $(n-1)(n-2)\ldots21 $}] {};
    \node (7) at (-2,5.5) [label=left:{\small $n(n-2)\ldots21 $}] {} ;
    \node (8) at (0,6.5) [label=right:{\small $n(n-1)(n-2)\ldots21 $}] {} ;
    \node (9) at (0,7.85) {\small\vdots};
    \node (10) at (0,9) [label=right:{\small $12\ldots(n-2)n(n-1)\ldots21$}] {};
    \path[->]  (1) edge node[left, scale=0.7]{$\alpha_1^\vee$}(2);
    \path[->]  (2)edge node[left, scale=0.7]{$\alpha_1^\vee$}(3);
    \path[->]  (3) edge node[left, scale=0.7]{$\alpha_1^\vee$}(4);
    \path[->]  (0,3.6) edge node[left, scale=0.7]{$\alpha_1^\vee$}(5);
	\path[->]  (5) edge node[above, scale=0.7]{$\alpha_1^\vee$}(7);
	\path[->]  (5) edge node[above, scale=0.7]{$\alpha_1^\vee$}(6);
 	\path[->]  (6) edge node[right, scale=0.7]{$\alpha_1^\vee$}(8);
 	\path[->]  (7) edge node[above, scale=0.7]{$\alpha_1^\vee$}(8);
 	\path[->]  (8) edge node[left, scale=0.7]{$\alpha_1^\vee$}(9);
 	\path[->]  (0,8.1) edge node[left, scale=0.7]{$\alpha_1^\vee$}(10);
 	\fill (6) circle (0.1);
 	\fill (7) circle (0.1);
 	\foreach \x in {1,2,3,5,8,10}
	{\fill (\x) circle (0.1);}
 \end{tikzpicture}
 \end{center}

It follows that $\varpi_1$ has the HLP if and only if $\ch(\kk)\neq 2$. 

\textbf{Exceptional Root Systems:}
We computed, with the help of the software Magma \cite{Magma}, for each of the exceptional Weyl groups the set of primes $p$ such that $\varpi_1$ has no HLP on $H^*(G/P_{S\setminus\{1\}},\kk)$ for an infinite field $\kk$ of characteristic $p$. 
We indicate in the Dynkin diagram the choice made for the vertex $1$.

\begin{center}
    \begin{tabular}{ | c | c | c | l |}
    \hline
    Root System & $|\Phi^+|$ & Dynkin Diagram & Primes with no HLP for $\varpi_1$ \\ \hline
	$F_4$ & $24$ & \begin{tikzpicture}[decoration={markings,mark=at position 0.7 with {\arrow{>}}}]
	\draw (0,0) -- (0.5,0);
	\draw (1,0) -- (1.5,0);
	\draw[double,postaction={decorate}] (0.5,0) -- (1,0);
	\foreach \x in {1,2,3}
    {\fill (\x*0.5,0) circle (0.04);}
    \fill (0,0) circle (0.04) node[above, scale=0.7] {$1$};
    \end{tikzpicture} & $2,3,13$ \\
    $G_2$ & $6$ &
\begin{tikzpicture}[decoration={markings,mark=at position 0.7 with {\arrow{>}}}]
	\draw[ double,postaction={decorate}] (0,0) -- (0.5,0);
	\draw[very thin] (0,0) -- (0.5,0);
    \fill (0.5,0) circle (0.04);
    \fill (0,0) circle (0.04) node[above, scale=0.7] {$1$};
    \end{tikzpicture}
    & $2$ \\
    $E_6$ & $36$ & \begin{tikzpicture}[decoration={markings,mark=at position 0.7 with {\arrow{>}}}]
\draw[thin] (0,0)--(1.4,0);
\draw[thin] (0.7,0)-- (0.7,0.35);
\foreach \x in {1,...,4}
{\fill (\x*0.35,0) circle (0.04);}
\fill (0.7,0.35) circle (0.04);
\fill (0,0) circle (0.04) node[above, scale=0.7] {$1$};
\end{tikzpicture} & $2,3,13$ \\
    $E_7$ & $63$ &
\begin{tikzpicture}[decoration={markings,mark=at position 0.7 with {\arrow{>}}}]
\draw[thin] (0,0)--(1.75,0);
\draw[thin] (1.05,0)-- (1.05,0.35);
\foreach \x in {1,...,5}
{\fill (\x*0.35,0) circle (0.04);}
\fill (1.05,0.35) circle (0.04);
\fill (0,0) circle (0.04) node[above, scale=0.7] {$1$};
\end{tikzpicture}    
    & $2,3,5,7,19,23$ \\
    $E_8$ & $120$ & \begin{tikzpicture}[decoration={markings,mark=at position 0.7 with {\arrow{>}}}]
\draw[thin] (0,0)--(2.10,0);
\draw[thin] (1.4,0)-- (1.4,0.35);
\foreach \x in {1,...,6}
{\fill (\x*0.35,0) circle (0.04);}
\fill (1.4,0.35) circle (0.04);
\fill (0,0) circle (0.04) node[above, scale=0.7] {$1$};
\end{tikzpicture}
& $ 2, 3, 5, 7, 19, 29, 31, 37, 41, 43, 47, 53$\\
    \hline
    \end{tabular}
\end{center}

This completes the proof of Proposition \ref{riassunto}.
\end{proof}

The following Lemma is standard:
\begin{lemma}\label{primitive}
Let $\kk$ be a field and $V=\bigoplus_{0\leq k\leq 2d} V^k$ a finite dimensional graded $\kk$-vector space. Let $\eta :V\ra V$ be a linear map of degree $2$ with the HLP, i.e. $\eta^k:V^{d-k}\ra V^{d+k}$ is an isomorphism for any $k$. Then there exists a decomposition of $V$, called the \emph{Lefschetz decomposition},  in the form
$$V=\bigoplus_{\stackrel{0\leq k\leq d}{1\leq i\leq r_k}} \kk[\eta]p_{k,i}$$ where $\{p_{k,i}\}_{1\leq i\leq r_k}$ is any basis of $V^{d-k}\cap \Ker(\eta^{k+1})$.  
\end{lemma}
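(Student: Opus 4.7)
The plan is to build the Lefschetz decomposition degree by degree starting from the low end. The key intermediate claim is
\[
V^m = P^m \oplus \eta(V^{m-2}) \qquad (0\le m\le d),
\]
where $P^m := V^m\cap\Ker(\eta^{d-m+1})$ (so with $m=d-k$, $P^m$ is exactly the primitive subspace in the statement). Iterating this and transferring to the upper half via the HLP isomorphisms $\eta^{m-d}$ will give the full decomposition; picking a basis $\{p_{k,i}\}$ of each $P^{d-k}$ will then assemble the pieces into cyclic $\kk[\eta]$-summands.

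To prove the key claim I would first verify directness: if $v=\eta w\in P^m$ with $w\in V^{m-2}$, then $\eta^{d-m+2}w=0$, and since $\eta^{d-m+2}\colon V^{m-2}\to V^{2d-m+2}$ is an HLP-isomorphism (for $m\ge 2$; the case $m<2$ is trivial as $V^{m-2}=0$), this forces $w=0$. For equality with $V^m$ I would run a dimension count. First, $\eta\colon V^{m-2}\to V^m$ is injective because $\eta^{d-m+2}$ factors through it, so $\dim\eta(V^{m-2})=\dim V^{m-2}$. Next, the image of $\eta^{d-m+1}\colon V^m\to V^{2d-m+2}$ equals $\eta(V^{2d-m})$ (since $\eta^{d-m}\colon V^m\to V^{2d-m}$ is an HLP-isomorphism), and this image has dimension $\dim V^{m-2}$ (chasing through one more HLP-isomorphism), so $\dim P^m=\dim V^m-\dim V^{m-2}$. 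Adding the two then gives $\dim(P^m\oplus\eta V^{m-2})=\dim V^m$.

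Iterating yields $V^m=\bigoplus_{j\ge 0}\eta^j(P^{m-2j})$ for $m\le d$, and applying the HLP isomorphism $\eta^{m-d}\colon V^{2d-m}\xrightarrow{\sim}V^m$ transports the decomposition in degree $2d-m$ to one in degree $m>d$. Now choose bases $\{p_{k,i}\}_{1\le i\le r_k}$ of $P^{d-k}$ for each $0\le k\le d$. By HLP, $\eta^k$ is injective on $V^{d-k}$, so $\eta^j p_{k,i}\ne 0$ for $0\le j\le k$ while $\eta^{k+1}p_{k,i}=0$ by construction; hence $\kk[\eta]p_{k,i}$ is a cyclic module of dimension $k+1$ with basis $\{\eta^j p_{k,i}\}_{0\le j\le k}$. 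Ranging over all $k,i,j$, these vectors are exactly the basis of $V$ produced by the decomposition above, so $V=\bigoplus_{k,i}\kk[\eta]p_{k,i}$.

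The only real work is the key claim itself; the main obstacle there is making the correct choice of $P^m$ (the kernel of $\eta^{d-m+1}$, not merely of $\eta$) so that every HLP isomorphism invoked in the dimension accounting lies in the range $0<k\le d$ where HLP applies. Once that is set up, every subsequent step is formal bookkeeping.
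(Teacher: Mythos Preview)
Your argument is correct. The decomposition $V^m=P^m\oplus\eta(V^{m-2})$ for $m\le d$, established via the dimension count you outline, is exactly the standard inductive construction of the Lefschetz decomposition, and the transport to degrees $m>d$ via the HLP isomorphisms $\eta^{m-d}$ is the right way to finish. One small remark: in the dimension count you assert $\dim\eta(V^{2d-m})=\dim V^{m-2}$; the cleanest justification is that $\eta\colon V^{2d-m}\to V^{2d-m+2}$ is \emph{surjective} (since the HLP isomorphism $\eta^{d-m+2}\colon V^{m-2}\to V^{2d-m+2}$ factors through it), and $\dim V^{2d-m+2}=\dim V^{m-2}$ again by HLP. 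You have this implicitly, but it is worth saying explicitly.

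There is nothing to compare with in the paper: the lemma is stated there as ``standard'' and no proof is given. Your write-up supplies precisely the omitted standard argument.
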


In particular if $V=\bigoplus_{k,i} \kk[\eta]p_{k,i}$ is a primitive decomposition we get a basis $\{\eta^lp_{k,i}\}$  (with $0\leq k\leq d$, $1\leq i\leq r_k$ and $0\leq l\leq k$) of $V$.

The existence of the Lefschetz decomposition implies that, after changing the basis, the map $\eta$ can be represented by a graph which is a disjoint union of simple strings.

\begin{ex}
 Let $\Phi$ be of type $D_4$ with the reflections labeled as above. Then, if $\ch(\kk)\neq 2$, we can choose $\{P_e,P_1,P_{21},P_{321}+P_{421},P_{321}-P_{421},2P_{4321},2P_{24321},2P_{124321}\}$ as a basis of $H^*(G/P_{S\setminus\{1\}},\kk)$.
 In this basis multiplication by $\varpi_1$ is represented by the following graph:
\begin{center}
\begin{tikzpicture}[scale=0.7]
  
	\foreach \x in {0,2.5,5,7.5,10,12.5}
		{\fill (\x,0) circle (0.1);
		 \path[->] (\x,0) edge node[below]{$\varpi_1$} (\x+2.4,0);}
		      \node at (0,0) [label=above:{\small $P_e$}] {};
		      \node at (2.5,0) [label=above:{\small $P_1$}] {};
		      \node at (5,0) [label=above:{\small $P_{21}$}] {};
		      \node at (7.5,0) [label=above:{\small $P_{321}+P_{421}$}] {};
		      \node at (10,0) [label=above:{\small $2P_{4321}$}] {};
		      \node at (12.5,0) [label=above:{\small $2P_{24321}$}] {};
		      \node at (15,0) [label=above:{\small $2P_{124321}$}] {};
		 \fill (15,0) circle (0.1);
	\fill (7.5,-2) circle (0.1);
	\node at (7.5,-2) [label=above:{\small $P_{321}-P_{421}$}] {};
\end{tikzpicture}
 \end{center}
\end{ex}

\section{Hard Lefschetz for Artinian Complete Intersection Monomial Rings}

In this section, let $\kk$ denote an arbitrary field of characteristic $p$.

\begin{teo}[\cite{Pr}]\label{Cook}
Let $A=\kk[\varpi_1,\varpi_2,\ldots,\varpi_n]/(\varpi_1^{d_1},\varpi_2^{d_2},\ldots,\varpi_n^{d_n})$. We regard $A$ as a graded algebra over $\kk$ in which the $\varpi_i$ have degree $2$. 
Let $d=\sum_{i=1}^n(d_i-1)$. Then if $p>d$ multiplication by $\lambda=\sum x_i\varpi_i$ has the HLP on $A$ if $x_i\in \kk^*$ for all $i$.
\end{teo}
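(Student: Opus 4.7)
The plan is to transport the classical $\sl_2$-representation theoretic proof of HLP for complete intersections from characteristic zero to characteristic $p > d$. On each factor $A_i = \kk[\varpi_i]/(\varpi_i^{d_i})$ I would define an $\sl_2$-triple by
\[
e_i(\varpi_i^k) = x_i \varpi_i^{k+1}, \qquad h_i(\varpi_i^k) = (2k - (d_i - 1))\varpi_i^k, \qquad f_i(\varpi_i^k) = x_i^{-1} k(d_i - k)\varpi_i^{k-1}.
\]
A direct computation verifies the relations $[e_i, f_i] = h_i$, $[h_i, e_i] = 2 e_i$, $[h_i, f_i] = -2 f_i$; this realizes $A_i$ as the simple $\sl_2$-module $L(d_i - 1)$ of dimension $d_i$. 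The construction is valid in any characteristic since only division by $x_i \in \kk^*$ is used.

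Next I would take the diagonal $\sl_2$-action on $A \cong A_1 \otimes \cdots \otimes A_n$. With $e := \sum_i \Id \otimes \cdots \otimes e_i \otimes \cdots \otimes \Id$, this operator is exactly multiplication by $\lambda = \sum_i x_i \varpi_i$, while $h$ acts on the degree-$2j$ component of $A$ by the scalar $2j - d$. Consequently, HLP for $\lambda$ is equivalent to the statement that $e^k$ induces an isomorphism between the $-k$ and $+k$ weight spaces of $h$ for every $0 < k \leq d$. To conclude, I would decompose $A$ as an $\sl_2$-module: since $p > d = (d_1 - 1) + \cdots + (d_n - 1)$, the Clebsch--Gordan rule $L(a) \otimes L(b) \cong \bigoplus_{j=0}^{\min(a,b)} L(a + b - 2j)$ applies iteratively (the hypothesis $a + b < p$ is preserved at every step), so $A$ is a direct sum of copies of simple modules $L(m)$ with $m \leq d < p$. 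On each such $L(m)$ the operator $e^k$ provides the desired isomorphism between the $\pm k$ weight spaces for $k \leq m$, while both weight spaces vanish for $k > m$; summing over simple summands yields HLP for $\lambda$ on $A$.

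The main obstacle is precisely the semisimplicity of the tensor product, as the category of finite-dimensional $\sl_2$-modules in characteristic $p$ is not semisimple in general. One must check that the inductive application of Clebsch--Gordan really works under the hypothesis $p > d$; this reduces to showing that the submodule generated by the highest weight vector $v_a \otimes v_b \in L(a) \otimes L(b)$ is isomorphic to $L(a+b)$ and splits off as a direct summand whenever $a + b < p$, which follows from the vanishing of $\Ext^1$ between the simples $L(m)$ for $m < p$ in the relevant range (a consequence of the linkage principle for $SL_2$, or of an explicit Casimir-type computation). Once this semisimplicity is granted, the remainder of the argument is routine.
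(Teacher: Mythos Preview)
Your proposal is correct and follows essentially the same route as the paper: endow each factor $\kk[\varpi_i]/(\varpi_i^{d_i})$ with an $\sl_2(\kk)$-module structure making it isomorphic to $L(d_i-1)$, pass to the tensor product so that $e$ acts as multiplication by $\lambda$, establish the Clebsch--Gordan decomposition $L(a)\otimes L(b)\cong\bigoplus_j L(a+b-2j)$ under the hypothesis $a+b<p$, and deduce HLP from the simple summands. The paper carries out the semisimplicity step (your ``main obstacle'') via exactly the Casimir computation you mention as one option, separating the highest weight submodules by their distinct Casimir eigenvalues; no appeal to the linkage principle is needed.
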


Let $\lambda=\sum x_i\varpi_i$ with $x_i\in\kk^*$. In \cite[Corollary 2]{Pr}, Proctor actually gives a closed formula for the determinants $D_k(\lambda)$ of $\lambda^k:A^{d-k}\ra A^{d+k}$.
From Proctor's formula we can easily check that all the determinants are in $\kk^*$ if $p>d$, hence $\lambda$ has the HLP on $A$.

We give here an alternative elementary proof based on the representation theory of $\sl_2(\kk)$. Let
$$f=\begin{pmatrix}0 & 0\\ 1 & 0\end{pmatrix},\qquad h=\begin{pmatrix}1 & 0\\ 0 & -1\end{pmatrix},\qquad e=\begin{pmatrix}0 & 1\\ 0 & 0\end{pmatrix},$$
so that $\{f,h,e\}$ is a basis of $\sl_2(\kk)$.

For any integer $0\leq m\leq p-1$ let $L(m)$ be the irreducible $\sl_2(\kk)$-module of highest weight $m$. 
These modules can be obtained by reduction from the characteristic $0$ case, i.e. $L(m)$ has a basis $\{v_{m-2k}\}_{0\leq k\leq m}$ such that the action of $\sl_2(\kk)$ is described by
$$h\cdot v_i=iv_i,\qquad e\cdot v_i=\frac{m+i+2}{2}v_{i+2},\qquad f\cdot v_i=\frac{m-i+2}{2}v_{i-2}$$
for any $i$, where we set $v_{m+2}=v_{-m-2}=0$.

Let $U=\mathcal{U}(\sl_2(\kk))$ be the universal enveloping algebra of $\sl_2(\kk)$.
Let $M$ be a $\sl_2(\kk)$-module and let $v\in M$ be a highest weight vector of weight $a$ with $0\leq a\leq p-1$, i.e. $h\cdot v=av$ and $e\cdot v=0$.
Then $U\cdot v=\spa\langle f^k\cdot v\mid k\geq 0\rangle$ is a submodule of $M$ such that $\dim (U\cdot v) \geq a+1$. Moreover, $\dim (U\cdot v) = a+1$  if and only if $U\cdot v\cong L(a)$.

We consider the Casimir element $C=2ef+2fe+h^2\in U$. 
 It is easy to check that $C$ lies in the center of $U$, therefore $C$ acts as a scalar on any highest weight module $U\cdot v$. 
 If $v$ is of weight $m$, we get  $C\cdot v=(2ef+h^2)\cdot v=(2m+m^2)v$, so $C$ acts as the scalar $2m+m^2$ on $U\cdot v$.

\begin{prop}\label{CleGor}
 Let $m_1,m_2,\ldots,m_n$ be non-negative integers such that their sum  $d:=\sum_{i=1}^n m_i$ is smaller than $p$. Then $L(m_1)\otimes L(m_2)\otimes \ldots \otimes L(m_n)$ is a semisimple $\sl_2(\kk)$-module and
 it decomposes as $\bigoplus_{a=0}^{d} L(a)^{\nu_a}$, where $\nu_a$ are non-negative integers.
\end{prop}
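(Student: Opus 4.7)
The strategy is induction on $n$, reducing to the two-factor case $L(m) \otimes L(m')$ with $m + m' < p$, which I then handle by lifting the characteristic-zero Clebsch--Gordan decomposition through the Casimir element $C = 2ef + 2fe + h^2$. The base $n = 1$ is trivial. For $n \geq 2$, the inductive hypothesis applied to $L(m_2) \otimes \cdots \otimes L(m_n)$ (whose weight sum is still $<p$) provides a decomposition $\bigoplus_j L(a_j)$ with $a_j \leq d - m_1$, so $L(m_1) \otimes L(m_2) \otimes \cdots \otimes L(m_n) \cong \bigoplus_j L(m_1) \otimes L(a_j)$ with each $m_1 + a_j < p$.

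For $M := L(m) \otimes L(m')$ with $m + m' < p$, the plan is to introduce the classical idempotents onto the isotypic components,
$$ e_k = \prod_{k' \neq k} \frac{C - k'(k'+2)}{k(k+2) - k'(k'+2)}, $$
where $k, k'$ range over $\{|m-m'|, |m-m'|+2, \ldots, m+m'\}$. The key computation is that each denominator $(k - k')(k + k' + 2)$ is a unit in $\kk$: one has $|k - k'| \leq m + m' < p$, while the \emph{even} integer $k + k' + 2$ lies in $(0, 2p]$, hence cannot equal $p$ (which is odd) nor $2p$ (which would force $k = k' = p-1$, contradicting $k \neq k'$). The edge case $p = 2$ forces $d \leq 1$, so $M$ is then trivially simple.

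Granted this integrality, the idempotent, orthogonality, and completeness identities satisfied by the $e_k$ --- polynomial relations in $C$ with $\mathbb{Z}_{(p)}$-coefficients valid over $\mathbb{Q}$ --- descend to $\kk[C]$. Since $C$ is central in $\mathcal{U}(\sl_2(\kk))$, each $e_k$ commutes with the $\sl_2(\kk)$-action, producing an $\sl_2(\kk)$-stable decomposition $M = \bigoplus_k e_k M$. Working with the divided-power $\mathbb{Z}_{(p)}$-integral form of $L(m)$ (which is well-defined because the coefficients $\tfrac{m \pm i + 2}{2}$ in the excerpt's formulas have denominators at most $m < p$), $e_k M$ is a direct summand of the free $\mathbb{Z}_{(p)}$-module $M_{\mathbb{Z}_{(p)}}$, hence free of rank $\dim_\mathbb{Q} e_k M_\mathbb{Q} = k + 1$. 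Finally, $e_k M$ contains a highest-weight vector $v$ of weight $k$, and because $k < p$ one has $f^{k+1}v = 0$ by weight considerations, so $v$ generates a copy of $L(k)$; by dimension, $e_k M \cong L(k)$.

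The main difficulty is establishing the integrality of the Casimir idempotents, namely the non-vanishing of $(k - k')(k + k' + 2)$ modulo $p$; this is precisely where the hypothesis $\sum m_i < p$ is essential. Once this descent is accomplished, the decomposition follows from standard polynomial identities in $C$.
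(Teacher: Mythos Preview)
Your proof is correct and shares with the paper the same core mechanism: induction to $n=2$, then separation via the Casimir element using that the eigenvalues $k(k+2)$ for $|m-m'|\leq k\leq m+m'$ are pairwise distinct modulo $p$ (exactly your computation that $(k-k')(k+k'+2)\in\kk^*$). The execution differs, however. The paper works entirely in characteristic $p$: for each $0\leq k\leq b$ it produces a highest weight vector $v_{a-b+2k}$ of weight $a-b+2k$ by a bare dimension count on $\Ker(e)$, observes that the cyclic submodules $U\cdot v_{a-b+2k}$ lie in distinct Casimir eigenspaces (hence sum directly), and then compares dimensions to force $U\cdot v_{a-b+2k}\cong L(a-b+2k)$ and exhaust $L(a)\otimes L(b)$. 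Your route instead imports the characteristic-zero Clebsch--Gordan decomposition via $\Z_{(p)}$-integral forms and Lagrange interpolation idempotents in $C$; this is more conceptual but requires extra bookkeeping (integral lattices, rank computations, descent of operator identities) that the paper avoids. The paper's argument is shorter and self-contained, while yours makes the role of the Casimir spectral decomposition more explicit and would generalise more readily to other situations where one already knows the characteristic-zero answer.
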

\begin{proof}
By induction it is enough to consider the case $n=2$. Let $a=m_1$ and $b=m_2$. We can assume $a\geq b$. 
Let $\{v_{a-2k}\}_{0\leq k\leq a}$ (resp. $\{w_{b-2k}\}_{0\leq k\leq b}$) be a basis of $L(a)$ (resp. $L(b)$) as described above.

As in the characteristic $0$ case, for any integer $k$ with $0\leq k \leq b$, there exists a highest weight vector $v_{a-b+2k}\in L(a)\otimes L(b)$ of weight $a-b+2k$.
In fact, $e$ induces a map
$$e:\spa\langle v_i\otimes w_j\mid i+j=a-b+2k\rangle\lra\spa\langle v_i\otimes w_j\mid i+j=a-b+2k+2\rangle$$
which has a non-trivial kernel, as we can easily see by comparing the dimensions.

For any $k$, we have $U\cdot v_{a-b+2k}\cu \Ker(C-2(a-b+2k)-(a-b+2k)^2)$. Since $$2(a-b+2k)+(a-b+2k)^2\not\equiv 2(a-b+2h)-(a-b+2h)^2 \pmod p$$
for any $0\leq k,h\leq b$ with $k\neq h$ we get
$$\bigoplus_{k=0}^{b}U\cdot v_{a-b+2k}\cu \bigoplus_{k=0}^{b}\Ker(C-2(a-b+2k)-(a-b+2k)^2)\cu L(a)\otimes L(b).$$

Now, by comparing the dimensions we must have $U\cdot v_{a-b+2k}\cong L(a-b+2k)$, whence
$L(a)\otimes L(b)=L(a-b)\oplus L(a-b+2)\oplus \ldots \oplus L(a+b).$\end{proof}

\begin{proof}[Proof of Theorem \ref{Cook}]
For any $x\in \kk^*$, the algebra $\kk[\varpi]/(\varpi^a)$ can be seen as a $\sl_2(\kk)$-module, where $e$ acts as multiplication by $x\varpi$ and $h$ acts as multiplication by $2k-a+1$ on $\varpi^k$. 
If $a\leq p$, then $\kk[\varpi]/(\varpi^a)\cong L(a-1)$ as a $\sl_2(\kk)$-module.

Therefore, by Corollary \ref{CleGor}, if $d=\sum_{i=1}^n(d_i-1)<p$ the  algebra 
$$A\cong\kk[\varpi_1]/(\varpi_1^{d_1})\otimes \kk[\varpi_2]/(\varpi_2^{d_2})\otimes \ldots \otimes\kk[\varpi_n]/(\varpi_n^{d_n})$$
is semisimple as a $\sl_2(\kk)$-module, where $e$ acts as multiplication by $x_1\varpi_1+x_2\varpi_2+\ldots+x_n\varpi_n$ and $h$ acts as multiplication by $2\sum_{i=1}^n k_i -d$ on $\varpi_1^{k_1}\otimes \varpi_2^{k_2}\otimes \ldots \otimes \varpi_n^{k_n}$.
In particular $A$ is decomposed in a direct sum of $L(m)$, with $m\leq p-1$. Now hard Lefschetz easily follows since $e\in \sl_2(\kk)$ has the HLP on $L(m)$, for any $0\leq m \leq p-1$.
\end{proof}

\section{Proof of Theorem \ref{teo1}}

The case $\ch(\kk)\leq |\Phi^+|$ is discussed in Remark \ref{pleqPhi} and Remark \ref{rank2}, so we can assume $\ch(\kk)=p>|\Phi^+|$.

Let $\lambda=\sum_{i=1}^n x_i\varpi_i$ as before.
In view of Corollary \ref{corbrutto},  it remains to show that the polynomials $D_k^{(n-1)}(\lambda)$, with $1<k\leq n$, are non-zero for some indexing of $S=\{1,2,\ldots,n\}$. 
In other words we have to show that the $(n-1)^\text{th}$-degenerate action of $\lambda$ defined by the graph
$\BB^{(n-1)}:=\mathfrak{B}_{\Phi}^{I_1}\times\mathfrak{B}_{\Phi(I_1)}^{I_2}\times \ldots \times\mathfrak{B}_{\Phi(I_{n-1})}$ satisfies the hard Lefschetz theorem.

Since $\ch(\kk)=p>|\Phi^+|$, it follows from Proposition \ref{riassunto} that we can choose an ordering of $S$ such that,  for any $1\leq j\leq n$ and any $x_j\in \kk^*$, $x_j\varpi_j$ has the HLP on $H^*(P_{I_{j-1}}/P_{I_j},\kk)$.
Therefore, as in Lemma \ref{primitive}, we have a Lefschetz decomposition 
$$H^*(P_{I_{j-1}}/P_{I_j},\kk)=\bigoplus_{\stackrel{0\leq k \leq d_j}{1\leq i\leq r_k}}\kk[\varpi_j]p^j_{k,i}$$
where $\{p^j_{k,i}\}_{1\leq i\leq r_k}$ is a basis of $H^{d_j-k}(P_{I_{j-1}}/P_{I_j},\kk)\cap \Ker(\varpi_j^{k+1})$ and
$d_j=\dim(P_{I_{j-1}}/P_{I_j})$.

We obtain a decomposition
$$H^*(G/P_{I_1},\kk)\otimes H^*(P_{I_1}/P_{I_2},\kk)\otimes\ldots \otimes H^*(P_{I_{n-1}}/B,\kk)\cong$$
$$\cong\bigoplus_{\stackrel{i_1,i_2,\ldots,i_n}{k_1,k_2,\ldots,k_n}} \kk[\varpi_1]p^1_{k_1,i_1}\otimes \kk[\varpi_2]p^2_{k_2,i_2}\otimes\ldots\otimes\kk[\varpi_n]p^n_{k_n,i_n}\cong$$
$$\cong \bigoplus_{\stackrel{i_1,i_2,\ldots,i_n}{k_1,k_2,\ldots,k_n}} \kk[\varpi_1]/(\varpi_1^{k_1+1})\otimes \kk[\varpi_2]/(\varpi_2^{k_2+1})\otimes\ldots \otimes \kk[\varpi_n]/(\varpi_n^{k_n+1})\cong$$
$$\cong\bigoplus_{\stackrel{i_1,i_2,\ldots,i_n}{k_1,k_2,\ldots,k_n}} \kk[\varpi_1,\varpi_2,\ldots,\varpi_n]/(\varpi_1^{k_1+1},\varpi_2^{k_2+1},\ldots,\varpi_n^{k_n+1})$$
into $\lambda$-stable subspaces. 
Since $$\sum_{j=1}^n k_j\leq \sum_{j=1}^n d_j=\sum_{j=1}^n \dim \left( P_{I_{j-1}}/P_{I_j}\right)=\dim \left(G/B\right)=|\Phi^+|$$ from Theorem \ref{Cook} it follows that $\lambda$ has the HLP on every single summand of the decomposition.
Thus Theorem \ref{teo1} follows.

\bibliographystyle{alpha}

\Address

\end{document}